\title{Quasi-Monte Carlo sampling for machine-learning partial differential equations}
\author[a,b]{Jingrun Chen \thanks{jingrunchen@suda.edu.cn}}
\author[a,b]{Rui Du \thanks{durui@suda.edu.cn}}
\author[a]{Panchi Li \thanks{LiPanchi1994@163.com}}
\author[a,c]{Liyao Lyu \thanks{lylv@stu.suda.edu.cn}}
\affil[a]{School of Mathematical Sciences, Soochow University, Suzhou, 215006, China.}
\affil[b]{Mathematical Center for Interdisciplinary Research, Soochow University, Suzhou, 215006, China.}
\affil[c]{CW Chu College, Soochow University, Suzhou, 215006, China.}
\date{2019/10/21}
\chardef\bslash=`\\ 
\newtheorem{thm}{Theorem}[section]
\newtheorem{lem}[thm]{Lemma}
\newtheorem{assumption}[thm]{Assumption}
\theoremstyle{definition}
\theoremstyle{remark}
\newtheorem{rem}{Remark}[section]
\newcommand{\thmref}[1]{Theorem~\ref{#1}}
\newcommand{\secref}[1]{\S\ref{#1}}
\newcommand{\lemref}[1]{Lemma~\ref{#1}}
\newcommand{\eval}[2][\right]{\relax
  \ifx#1\right\relax \left.\fi#2#1\rvert}
\let\abs=\envert
\let\norm=\enVert
\begin{document}
\maketitle
\renewcommand{\sectionmark}[1]{}

\begin{abstract}
Solving partial differential equations in high dimensions by deep neural network has brought significant attentions in recent years. In many scenarios, the loss function is defined as an integral over a high-dimensional domain. Monte-Carlo method, together with the deep neural network, is used to overcome the curse of dimensionality, while classical methods fail. Often, a deep neural network outperforms classical numerical methods in terms of both accuracy and efficiency. In this paper, we propose to use quasi-Monte Carlo sampling, instead of Monte-Carlo method to approximate the loss function. To demonstrate the idea, we conduct numerical experiments in the framework of deep Ritz method proposed by Weinan E and Bing Yu \cite{E2018}. For the same accuracy requirement, it is observed that quasi-Monte Carlo sampling reduces the size of training data set by more than two orders of magnitude compared to that of MC method. Under some assumptions, we prove that quasi-Monte Carlo sampling together with the deep neural network generates a convergent series with rate proportional to the approximation accuracy of quasi-Monte Carlo method for numerical integration. Numerically the fitted convergence rate is a bit smaller, but the proposed approach always outperforms Monte Carlo method. It is worth mentioning that the convergence analysis is generic whenever a loss function is approximated by the quasi-Monte Carlo method, although observations here are based on deep Ritz method.
\end{abstract}

\medskip
\noindent{\textbf{Keywords}:} Quasi-Monte Carlo sampling; Deep Ritz method; Loss function; Convergence analysis

\medskip
\noindent{{\textbf{AMS subject classifications}:} 35J25, 65D30, 65N99}

\section{Introduction}
\label{intro}

Deep neural networks (DNNs) have had great success in text classification, computer vision, natural language processing and other data-driven applications (see, e.g., \cite{Goodfellow2016,Wang2012EndtoendTR,NIPS2012_4824,hinton2012deep,Sarikaya:2014:ADB:2687012.2687014}). Recently, DNNs have been applied to the field of numerical analysis and scientific computing, with the emphasis on solving high-dimensional partial differential equations (PDEs) (see, e.g., \cite{E2018,Jiequn2018,Fan2019,deepGalerkin2018}), which are widely used in physics and finance. Notable examples include Schr\"odinger equation in the quantum many-body problem \cite{Carleo2017,Han2019}, Hamilton-Jacobi-Bellman equation in stochastic optimal control \cite{Jiequn2018,Weinan2017349}, and nonlinear Black-Scholes equation for pricing financial derivatives \cite{Beck20191563,GonzlezCervera2019}.

Classical numerical methods, such as finite difference method \cite{Randall2007} and finite element method \cite{FEM2007}, share the similarity that the approximation stencil has compact support, resulting in the sparsity of stiffness matrix (or Hessian in the nonlinear case). Advantages of these methods are obvious for low dimensional PDEs (the dimension $K\leq 3$). However, the number of unknowns grows exponentially as $K$ increases and classical methods run into the curse of dimensionality. In another line, spectral method \cite{spectral2011} uses basis functions without compact support and thus sacrifices the sparsity, but often has the exponential accuracy. However, the number of modes used in the spectral method also grows exponentially as $K$ increases. Sparse grid method \cite{gerstner_numerical_1998,bungartz_sparse_2004} mitigates the aforementioned situation to some extent ($K\leq 9$ typically). Therefore, high-dimensional PDEs are far out of the capability of classical methods.

The popularity of DNNs in scientific computing results from its ability to approximate a high-dimensional function without the curse of dimensionality. To illustrate this, we focus on methods in which the loss function is defined as an integral over a bounded domain in high dimensions; see the deep Ritz method \cite{E2018} and the deep Galerkin method \cite{DG2016} for example. The success of DNNs relies on composition of functions without compact support and sampling strategy for approximating the high-dimensional integral. It is known that the choice of approximate functions in DNNs is of particular importance. For example, in the current work, the approximate function in one block of DNN consists of two linear transformations, two nonlinear activation functions, and one shortcut connection. Besides, since the network architecture is chosen a priori, thus the number of parameters can be independent of $K$ or only grows linearly as $K$ increases. On the other hand, only a fixed number of samples (or at most linear growth) is used to approximate the high-dimensional integral. Altogether, DNNs can overcome the curse of dimensionality when solving high-dimensional PDEs. In \cite{Bottou2018optimization}, the above step of numerical quadrature is viewed as approximating the expected risk by its empirical risk using Monte Carlo (MC) method. Consequently, the full gradient of the loss function is approximated by a finite number of samples and the stochastic gradient descent (SGD) method is used to find the optimal set of parameters in the network. It is shown that such a procedure converges under some assumptions \cite{Bottou2018optimization}.

From the perspective of numerical analysis, using $N$ i.i.d. random points, MC method approximates an integral with $\mathcal{O}(N^{-1/2})$ error \cite{liu2008monte}. It is also known that using $N$ carefully chosen (deterministic) points, quasi-Monte Carlo (QMC) method approximates an integral with $\mathcal{O}((\log{N})^K/N^{-1})$ error and the logarithmic factor can be removed under some assumptions \cite{Ogata1989,QMCerror1992,Dick2013QMC,Bruno2004survey}. Therefore, it is natural to replace MC method by QMC method in the community of machine learning. One example is the usage of QMC method in variational inference and QMC method has been proved to perform better than MC method \cite{QMCvariational2018}. Another example is the usage of QMC sampling in the stage of data generation for training DNNs; see an application in organic semiconductors \cite{Liyao2019}. In this work, we consider another application of QMC method, i.e., approximation of the high-dimensional integral in machine-learning PDEs.

In order to demonstrate the advantages of QMC method, we take deep Ritz method \cite{E2018} as an example. Results obtained here shall be applicable to other methods, like deep Galerkin method \cite{DG2016}, where a high-dimensional integral is defined as the loss function. Briefly speaking, deep Ritz method solves a variational problem coming from a high-dimensional PDE using a deep neural network with residual connection. Data are drawn randomly over the high-dimensional domain to train the parameters of the neural network. All numerical observations in the current work are based on deep Ritz method. In deep Galerkin method, the loss function contains not only the volume integral over the high-dimensional domain but also penalty terms for boundary conditions and initial conditions. We also demonstrate the advantage of QMC method in deep Ritz method when the penalty term is present. Theoretically, under certain assumptions, we prove a convergence result of the SGD method with respect to both the iteration number and the size of training data set.

The paper is organized as follows. We first introduce deep Ritz method for PDEs and QMC method in \secref{sec:QMC for deep-ritz}. Numerical results of QMC sampling and MC sampling are shown in \secref{sec:numerical results} with convergence analysis given in \secref{sec:analysis}. Conclusions are drawn in \secref{sec:conclusion}.

\section{Quasi-Monte Carlo sampling for deep Ritz method}
\label{sec:QMC for deep-ritz}

For completeness, we first introduce deep Ritz method. The basic idea is to solve a variational problem associated to a PDE using DNNs. The training data points are chosen randomly over the given domain using MC method. SGD method is then used to find an optimal solution. In the current work, QMC method is employed to replace MC method and the other components are remained almost the same. For consistency, we use superscripts for indices of sampling points and subscripts for coordinates of a vector throughout the paper.

\subsection{Loss function}
We take the variational problem associated to the Poisson equation \cite{evans_2010} as an example
\begin{equation}\label{equ:functional problem}
\min_{u\in H}I[u],
\end{equation}
where the loss function (objective function) $I[u]$ reads as
\begin{equation}\label{equ:loss function}
  I[u]=\int_{\Omega}\left(\frac{1}{2}\left|\nabla u(x)\right|^2-f(x)u(x)\right) \mathrm{d}x,
\end{equation}
and the set of trial functions $H$ is of infinite dimension. Here $f$ is a given function, representing the external force to the system and $\Omega$ is a bounded domain in $\mathbb{R}^K$.

When the solution of a PDE is approximated by a neural network $u(x)\approx\hat{u}_{\theta}(x)$, i.e., $H$ is restricted to a finite-dimensional space,
our goal is to find the optimal set of parameters in the neural network, denoted by $\theta$, such that
\begin{equation}\label{equ:dnnsolution}
I(\theta)=\int_{\Omega}\left(\frac{1}{2}\left|\nabla\hat{u}_{\theta}(x)\right|^2 -f(x)\hat{u}_{\theta}(x)\right) \mathrm{d}x
\end{equation}
is minimized. Numerically, a quadrature scheme is needed and the above objective function is approximated by
\begin{equation}\label{equ:discretize}
I(\theta)\approx \frac{1}{N}\sum_{i=1}^{N}\left(\frac{1}{2}\left|\nabla\hat{u}_{\theta}(x^i)\right|^2 -f(x^i)\hat{u}_{\theta}(x^i)\right).
\end{equation}
with $N$ sampling points $\{x^i\}_{i=1}^N$ which will be specified in \secref{sec:sampling}.

\subsection{Trail function and network architecture}\label{sec:QMC for deep-ritz sub:trail function}

The neural network we use here is stacked by several blocks with each containing two linear transformations, two activation functions and one shortcut connection. The $i$-th block can be formulated as
\begin{equation}\label{equ:ith block}
  t= f_i(s)= \sigma(\theta_{i,2}\cdot\sigma(\theta_{i,1}\cdot s +b_{i,1})+b_{i,2})+s.
\end{equation}
Here $s\in R^{m}$  is the input, $t\in R^{m}$  is the output, weights $\theta_{i,1}, \theta_{i,2} \in R^{m\times m}, b_{i,1}, b_{i,2}\in R^{m}$ and $\sigma$ is the activation function. Figure \ref{fig:resnet} demonstrates one block of the network.
\begin{figure}[ht]
	\includegraphics[width=\textwidth]{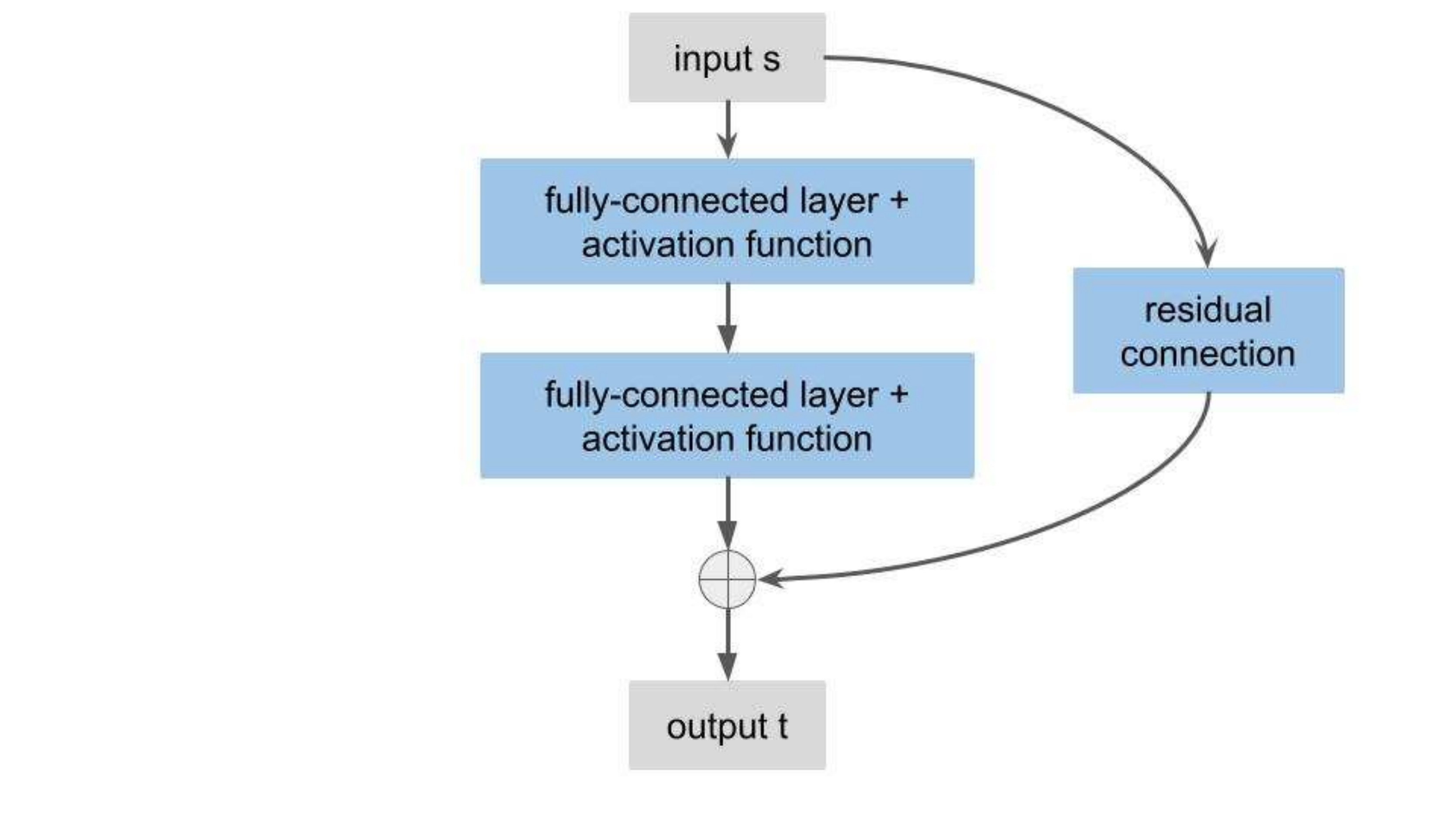}
	\caption{One block of the network. Typically a deep neural network contains a sequences of blocks, each of which consists of two fully-connected layer and one shortcut connection.}
	\label{fig:resnet}
\end{figure}

To balance simplicity and accuracy, we use the following swish function
\begin{equation}\label{equ:activation function}
  \sigma(x)=\frac{x}{1+\exp(-x)}
\end{equation}
as the activation function \cite{DBLP:journals/corr/abs-1710-05941}, which is different from the one used in \cite{E2018}.

The last term on the right-hand side of \eqref{equ:ith block} is called the shortcut connection or residual connection.
Benefits of using it are \cite{DBLP:journals/corr/HeZRS15}:
\begin{enumerate}[1)]
\item It can solve the notorious problem of vanishing/exploding gradient automatically;
\item Without adding any parameters or computational complexity, the shortcut connection performing as an \textit{Identity} mapping can resolve the degradation issue (with the network depth increasing, accuracy gets saturated and then degrades rapidly).
\end{enumerate}
With these components, the fully connected $n$-layer network can be expressed as
\begin{equation}\label{equ:fully network}
  f_{\theta'}(x)= f_n \circ f_{n-1} \cdots \circ f_1(x),
\end{equation}
where $\theta'$ denotes the set of parameters in the network. Since the input $x$ in the first block is in $\mathbb{R}^K$, not in $\mathbb{R}^m$, we need to apply a linear transformation on $x$ before putting it into the network structure. Having $f_{\theta'}(x)$, we obtain $\hat{u}_{\theta}(x)$ by
\begin{equation}\label{equ:approximate u}
  \hat{u}_{\theta}(x)= a \cdot f_{\theta'}(x) +b,
\end{equation}
where $\theta =\{\theta',a,b\}$. Note that the parameters $a$ and $b$ in \eqref{equ:approximate u} also need to be trained.

To make \eqref{equ:functional problem} - \eqref{equ:loss function} have a unique solution, a boundary condition has to be imposed. Consider the inhomogeneous Dirichlet boundary condition for example
\begin{equation}\label{equ:DBC problem}
\begin{aligned}
&u(x) = g(x) &    x \in\partial \Omega.
\end{aligned}
\end{equation}
One way to implement it is to select trail functions that satisfy the boundary condition and thus have to be problem-dependent. To avoid this, we build trail functions of the form
\begin{equation}\label{equ:build for DBC}
\hat{u}_{\theta}(x)  = A(x)\cdot( a\cdot f_{\theta'}(x) +b) +B(x), \quad x\in \Omega,
\end{equation}
where by choice $A(x) =0$ when $x\in \partial\Omega$ and $B(x)=g(x)$ when $x \in \partial \Omega$. Therefore, $\hat{u}_{\theta}(x)$ satisfies the boundary condition automatically. For Neumann boundary condition, however, we have to add a penalty term into the loss function; see \eqref{equ:loss for general} in \secref{sec:NBC} for example.

\subsection{Sampling strategies}\label{sec:sampling}

The loss function is defined over a high-dimensional domain, thus only a fixed size of points (mini-batch $\{x^i\}_{i=1}^{N}$) is allowed to approximate the integral. Due to the curse of dimensionality, standard quadrature rules may run into the risk where the integrand is minimized on fixed points but the functional itself is far away from being minimized \cite{E2018}. Therefore, points are chosen randomly and the approximation accuracy is of $\mathcal{O}(N^{-1/2})$ \cite{liu2008monte}. For stochastic problems, from the perspective of sampling strategies, it is well known that QMC method performs much better with the same size of sampling point \cite{niederreiter1992random,Russel1998,dick2013high}. We briefly review both methods here.

Consider $\Omega = [0, 1]^K$ ($K\gg1$) for convenience and let $u$ be an integrable function in $\Omega$
\begin{equation}
  \label{equ:general integeral}
  I(u) = \int_{\Omega}u(x)\mathrm{d}x < \infty,
\end{equation}
which is approximated by $N$ points of the form
\begin{equation}
  \label{equ:integration rule}
  Q_N(u) = \frac 1{N}\sum_{i=1}^{N} u(x^i).
\end{equation}
Let $P_N = \{x^i\}_{i = 1}^N\subset\Omega$ being the prescribed sampling points. In MC method, these points are chosen randomly and independently from the uniform distribution in $\Omega$. There exists a probabilistic error (root mean square error) estimate for MC method
\[
\sqrt{\mathds{E}\left[ \abs{I(u) - Q_N(u)}^2 \right]} = \frac{\sigma(u)}{\sqrt{N}},
\]
where $\sigma^2(u)$ is the variance of $u$ of the form
\[
\sigma^2(u) = I(u^2) - (I(u))^2.
\]
It is easy to check that MC method is unbiased, i.e., $\mathds{E}[Q_N(u)] = I(u)$. The variance of MC method is
\begin{align*}
  \textrm{Var}(Q_N(u)) &= \mathds{E}\left[ \abs{I(u) - Q_N(u)}^2 \right] \\ &= \frac 1{N(N-1)}\sum_{i=0}^{N-1}(u(x^i) - Q_N(u))^2\\ &= \frac 1{N(N-1)}\left(\sum_{i=0}^{N-1}u^2(x^i) - N[Q_N(u)]^2 \right).
\end{align*}

In QMC method, however, sampling points are chosen in a deterministic way to approximate the integral with the best approximation accuracy; see for example \cite{Ogata1989,QMCerror1992,Dick2013QMC,Bruno2004survey}. The deterministic feature of QMC method leads to a guaranteed error bounds and faster convergence rate for smooth integral functions. More explicitly, an upper bound of the deterministic error, known as Koksma-Hlawka error bound \cite{QMCerror1992}, is
\begin{equation}
  \label{equ:KH bound}
  \abs{Q_N(u) - I(u)} \leq D(P_N)V(u).
\end{equation}

Here variation $V(u)$ is defined as
\begin{equation*}
  V(u) = \sum_{k=1}^{K}\sum_{1\leq i_1<\cdots<i_k\leq K}V_{Vit}^{(k)}(u; i_1,\cdots, i_k),
\end{equation*}
where $V_{Vit}^{(k)}(u; i_1, \cdots, i_k)$ is the variation in sense of Vitali applied to the restriction of $u$ to the space
of dimension $k\;\{(u_1, \cdots, u_K)\in \Omega : u_j = 1 \;\textrm{for}\; j\neq i_1, \cdots, i_k \}$. Precisely, let $\Delta(u, J)$
be the alternative sum of $u$ values at the edges of sub-interval $J$ when $P_N$ is a partition
of $\Omega=[0, 1]^K$, we give the definition of the variation in sense of Vitali as
\begin{equation*}
  V_{V_{it}}(u) = \sup_{P_N}\sum_{J\in P_N}\abs{\Delta(u, J)}.
\end{equation*}
$D(P_N)$ is defined to measure the discrepancy of the set $P_N$ as
\begin{equation*}
  D(P_N) = \sup_{x\in[0,1]^K}\abs{\frac{\sum_{n=0}^{N-1}1_B(x^n)}{N} - \prod_{i=1}^{K}x_i},
\end{equation*}
where $x=(x_1,x_2,\cdots,x_K)$. Note that the error bound in \eqref{equ:KH bound} is controlled by this discrepancy and $\lim\limits_{N\to+\infty}D(P_N) = 0$ if and only if the sequence $P_N$ is equi-distributed. A sequence $P_N$ is said to be a low-discrepancy sequence if $D(P_N) = O((\ln N)^K/N)$, the best-known result for infinite sequences. Therefore, QMC method converges much faster than MC method. Practically, the commonly used Sobol sequence is one of the low-discrepancy sequences \cite{Sobol1976,Bruno2004survey}.

\subsection{Stochastic gradient descent method}

In deep Ritz method, we use the SGD method to find the optimal set of parameters. The SGD method finds the optimal solution in an iterative way with the $i$-th iteration of the form
\begin{equation}
  \theta_{i+1} = \theta_i + \alpha_ig(\theta_i, \xi^k),
\end{equation}
where $g(\theta_i, \xi^k)$ is a stochastic vector
\begin{equation}
  g(\theta_i, \xi^k) = \frac 1{N}\sum_{k=1}^{N}\nabla I(\theta_i, \xi^k)
\end{equation}
obtained by a sampling strategy, $\xi^k$ is a sampling point, and $\alpha_i$ is the stepsize. Practically, we use ADAM \cite{ADAM} to accelerate the training process for both MC sampling and QMC sampling.

\section{Numerical results}\label{sec:numerical results}

Now we are ready to apply QMC sampling strategy to train the network structure of deep Ritz method in \secref{sec:QMC for deep-ritz sub:trail function}. For Dirichlet problems and some special Neumann problems, we do not need penalty terms on the boundary. However, for general Neumann problems, a penalty term must be added to the loss function and thus we have to approximate this term by sampling on the boundary. No matter in which case, numerically QMC method always performs better than MC method. We use the relative $L_2$ error for quantitative comparison in all examples
\begin{equation}\label{equ:L2 error}
  \begin{aligned}
  \mathrm{error} = \sqrt{\dfrac{\int_{\Omega}\left(\hat{u}_{\theta}(x)-u(x)\right)^2 \mathrm{d}x}{\int_{\Omega}u(x)^2\mathrm{d}x}}.
  \end{aligned}
\end{equation}

\subsection{Dirichlet problem}
Consider the Poisson equation over $\Omega=[-1,1]^K$
\begin{equation}\label{equ:poisson equation with DBC}
\left\{
  \begin{aligned}
  -\triangle u &= \pi^2 \sum_{k=1}^{K}\cos(\pi x_k) & x\in\Omega,\\
  u(x) &=\sum_{k=1}^{K}\cos(\pi x_k) & x\in \partial \Omega.
  \end{aligned}
  \right.
\end{equation}
The exact solution is $  u(x) =\sum_{k=1}^{K}\cos(\pi x_k)$. We construct the network in the form of
\begin{equation}\label{equ:exm1 network}
\left\{
\begin{aligned}
  &\hat{u}_{\theta}(x)= A(x)\cdot f_{\theta}(x) +B(x)& x\in \Omega,\\
  &A(x)=\exp \left(\prod_{k=1}^{K}(x_k^2-1)\right)-1 & x\in \Omega,\\
  &B(x)=\exp \left(\prod_{k=1}^{K}(x_k^2-1)\right)\sum_{k=1}^{K}\cos(\pi x_k) & x \in \Omega.
\end{aligned}
\right.
\end{equation}
It is easy to verify that $A(x)=0, B(x)=\sum_{k=1}^{K}\cos(\pi x_k)$ on the boundary, satisfying the structure defined in \eqref{equ:build for DBC}. Figure \ref{fig:compare true with train DBC} plots exact and trained solutions to \eqref{equ:poisson equation with DBC} in 2D and qualitative agreement is observed.
\begin{figure}
     \centering
     \begin{subfigure}[b]{0.48\textwidth}
         \centering
         \includegraphics[width=\textwidth]{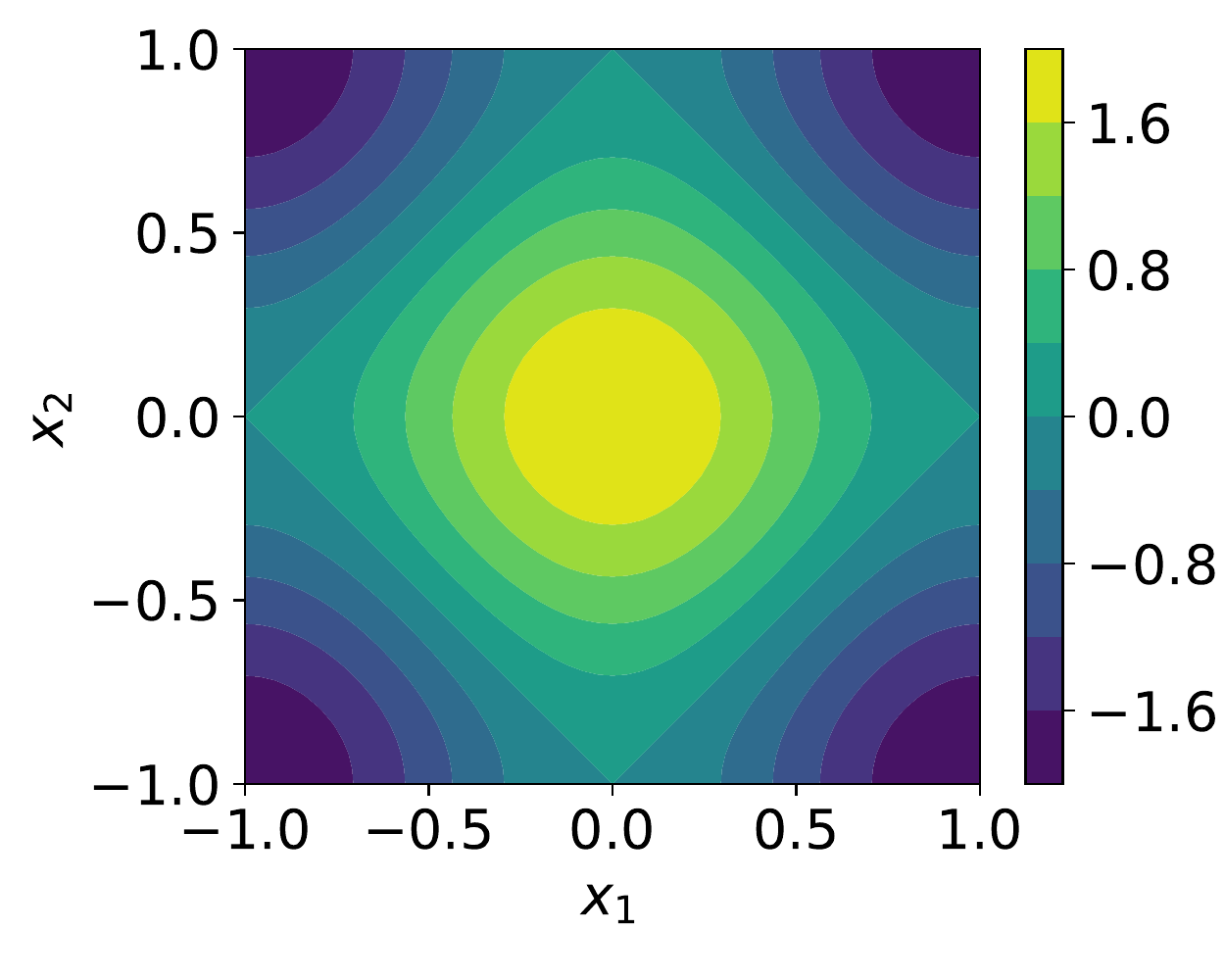}
         \caption{Exact solution}
         \label{fig:train_solution_DBC}
     \end{subfigure}
     \hfill
     \begin{subfigure}[b]{0.48\textwidth}
         \centering
         \includegraphics[width=\textwidth]{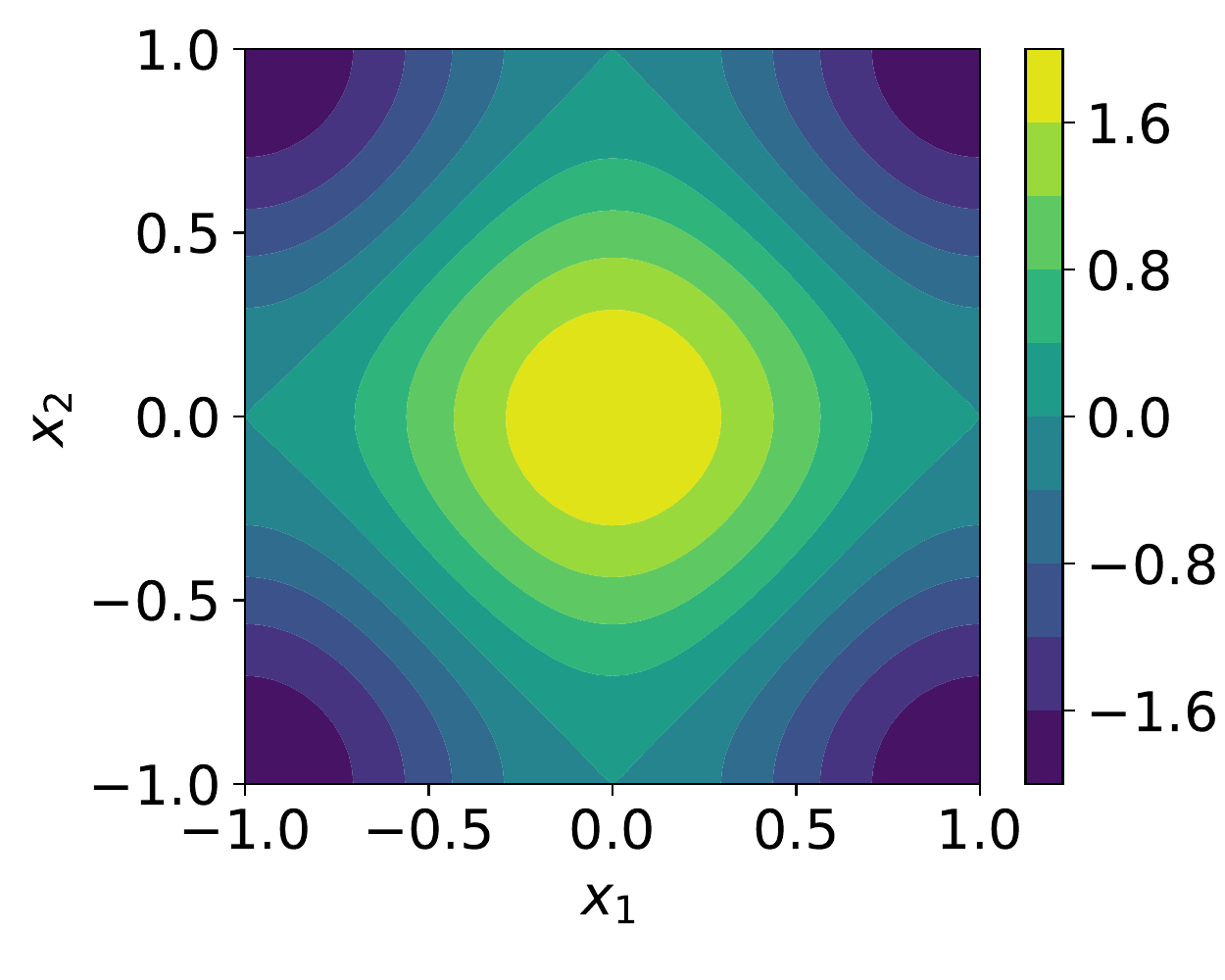}
         \caption{Trained solution}
         \label{fig:true_solution_DBC}
     \end{subfigure}
        \caption{Exact and trained solutions to problem \eqref{equ:poisson equation with DBC} in 2D. The exact solution $u(x) =\sum_{k=1}^{2}\cos(\pi x_k)$ and the approximate solution is trained with 5000 points (Sobol sequence) used at each iteration.}
        \label{fig:compare true with train DBC}
\end{figure}
Detailed setup of the neural network used for Dirichlet problem in different dimensions is recorded in Table \ref{tab:detail of network DBC}.
\begin{table}[htbp]
\centering
\caption{Detailed setup of the neural network used for Dirichlet problem in different dimensions, where $m$ denotes the number of nodes contained in each layer.}
\label{tab:detail of network DBC}
\begin{tabular}{llll}
\hline\noalign{\smallskip}
Dimension & Blocks Num& $m$ & Parameters  \\
\noalign{\smallskip}\hline\noalign{\smallskip}
2 & 3 & 8 & 465\\
4 & 4 & 16 &2274\\
8 & 4 & 20 &3561\\
16 & 4 &48 &19681\\
\noalign{\smallskip}\hline
\end{tabular}
\end{table}

Relative $L_2$ errors in different dimensions and the corresponding convergence rates with respect to the mini-batch size are shown in Table \ref{tab:error of DBC} and Table \ref{tab:convergence error of DBC}, respectively. Since there are some oscillations as the iteration increases, each point here represents the error averaged over $50$ iterations. The total number of iterations is set to be $10000$. It is reasonable to find that QMC method performs better than MC method as shown in Tables \ref{tab:error of DBC} and \ref{tab:convergence error of DBC}. When the size of mini-batches increases, the advantage of QMC method over MC method reduces. This is natural in the sense that both methods converges when the number of sampling points increases. We further plot detained training processes of QMC and MC methods in Figure \ref{fig:compare 2D DBC}. Clearly, QMC sampling reduces the magnitude of error of MC method by about three times on average with the same mini-batch size for the 2D problem.
\begin{table}[htbp]
	\centering
	\caption{Relative $L^2$ errors in different dimensions with different mini-batch sizes for Dirichlet problem and the onvergence order is recored with respect to $1/N$.}
	\label{tab:error of DBC}       
	\begin{tabular}{llllll}
		\hline\noalign{\smallskip}
		\multirow{2}*{Dimension} &\multirow{2}*{mini-batch size}&\multicolumn{2}{c}{QMC} &\multicolumn{2}{c}{MC}  \\
		~&~&\multicolumn{1}{c}{error($\times10^{-2}$)}& \multicolumn{1}{c}{order}&\multicolumn{1}{c}{error($\times10^{-2}$)}& \multicolumn{1}{c}{order}\\
		\noalign{\smallskip}\hline\noalign{\smallskip}
		\multirow{4}*{2D} & 500 & 1.7141& &4.2706&   \\
		~ & 1000 & 1.1420  & 0.59&3.4157 & 0.32\\
		~ & 2000 & 0.7702  & 0.59&2.6225 & 0.38\\
		~ & 4000 & 0.6401  & 0.27&2.2505 & 0.22\\
		\hline
		\multirow{4}*{4D} & 500 &
		1.8735 & &3.5183 &\\
		~ & 1000 & 1.4468 &0.37& 3.0786& 0.19 \\
		~ & 2000 & 1.0557 &0.45& 2.6561& 0.21\\
		~ & 4000 & 0.8076 &0.39& 2.0410& 0.38 \\
		\hline
		\multirow{4}*{8D} & 500 &
		2.0737 & & 2.4514 &\\
		~ & 1000 & 1.5714 &0.40& 2.2083& 0.15\\
		~ & 2000 & 1.1607 &0.43& 2.0297& 0.12\\
		~ & 10000 & 0.8139 &0.22 & 1.1551& 0.35\\
		\hline
		\multirow{4}*{16D}& 2000 & 0.8613 & &2.0754\\
		~ & 5000 & 0.6863 & 0.25 & 1.3506 &0.47\\
		~ & 10000 & 0.5361 &0.36& 1.1383 & 0.25\\
		~ & 20000 & 0.4623 &0.21& 1.2298 & 0.11
		\\
		\noalign{\smallskip}\hline
	\end{tabular}
\end{table}
\begin{table}[htbp]
\centering
\caption{Fitted convergence rates of the relative $L^2$ error with respect to the mini-batch size in different dimensions (recorded in terms of $1/N$) for Dirichlet problem.}
\label{tab:convergence error of DBC}       
\begin{tabular}{llll}
\hline\noalign{\smallskip}
Dimension & QMC & MC  \\
\noalign{\smallskip}\hline\noalign{\smallskip}
2D & 0.48 & 0.32\\
4D & 0.41 & 0.26\\
8D & 0.31 & 0.26\\
16D&0.28 & 0.24 \\
\noalign{\smallskip}\hline\noalign{\smallskip}
\end{tabular}
\end{table}
\begin{figure}
     \centering
     \begin{subfigure}[b]{0.48\textwidth}
         \centering
         \includegraphics[width=\textwidth]{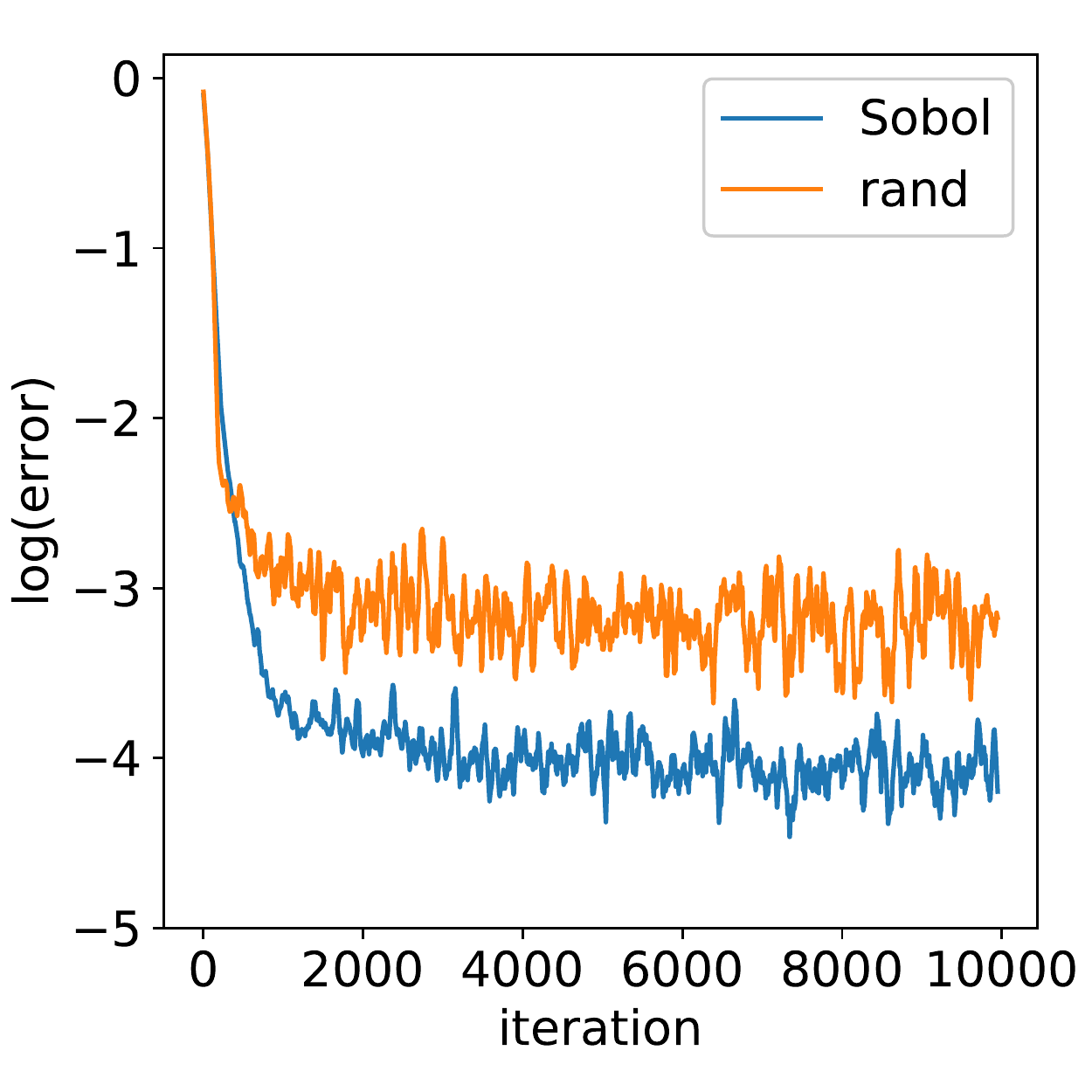}
         \caption{Mini-batch size: 500}
         \label{fig:2DcompareDBC500}
     \end{subfigure}
     \hfill
     \begin{subfigure}[b]{0.48\textwidth}
         \centering
         \includegraphics[width=\textwidth]{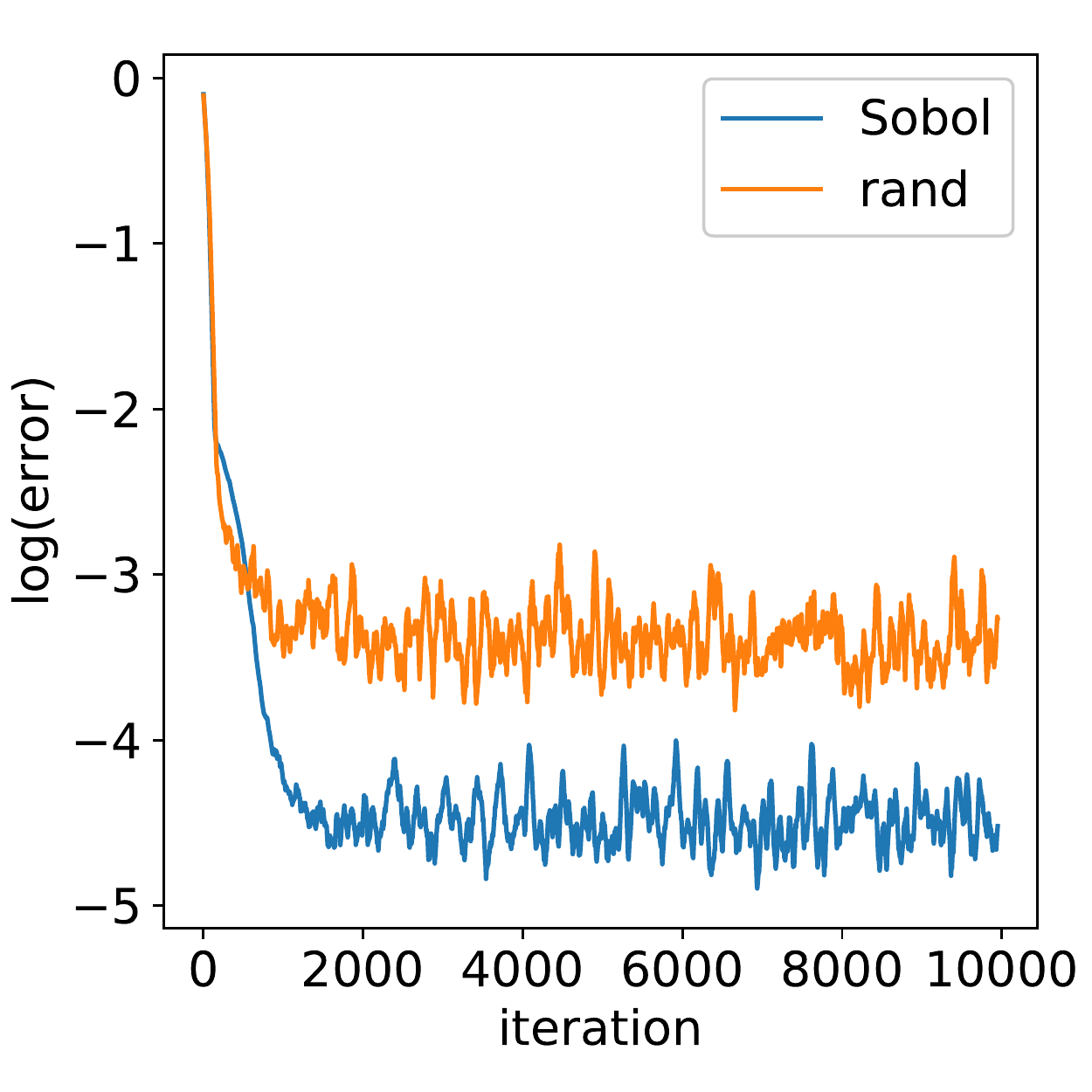}
         \caption{Mini-batch size: 1000}
         \label{fig:2DcompareDBC1000}
     \end{subfigure}
     \begin{subfigure}[b]{0.48\textwidth}
         \centering
         \includegraphics[width=\textwidth]{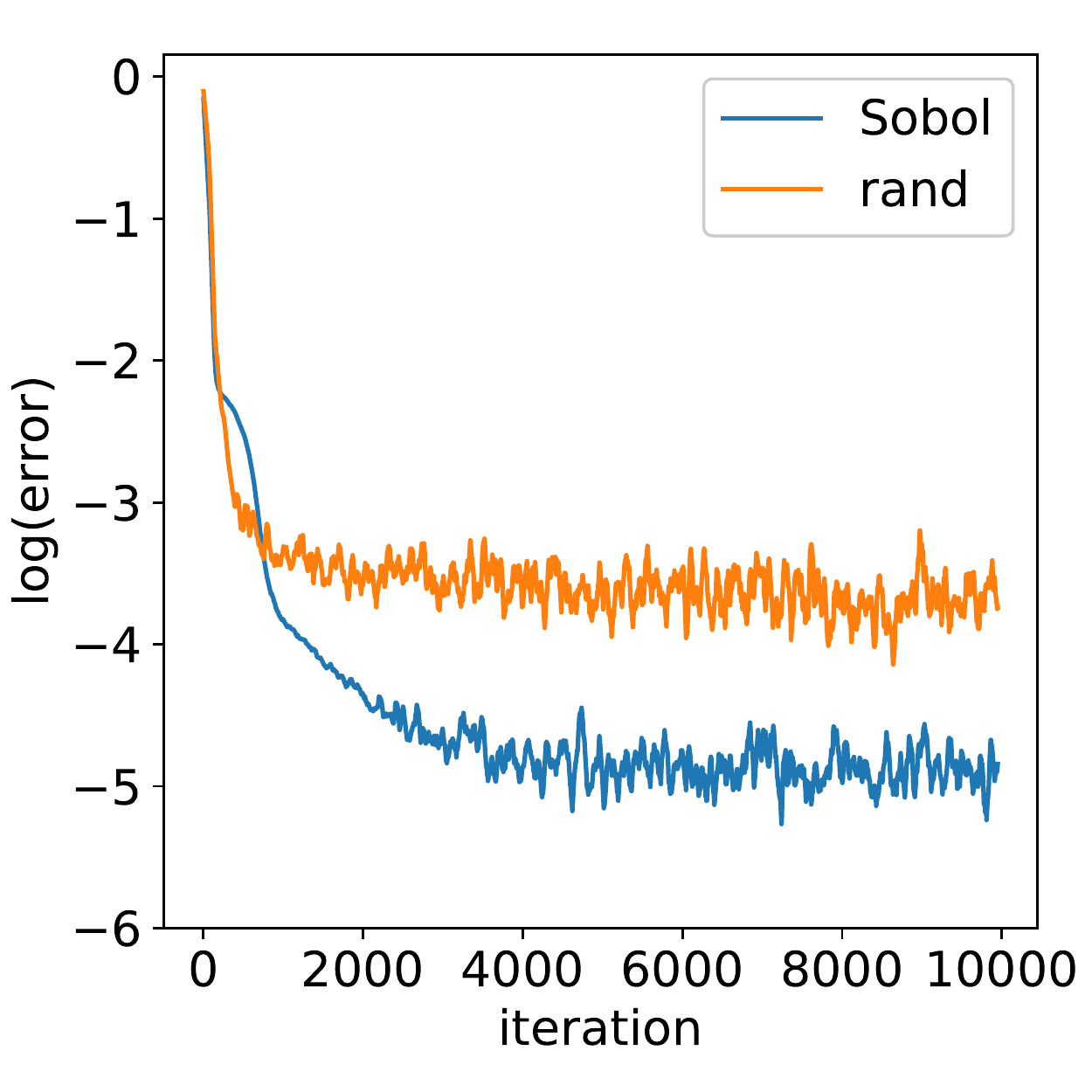}
         \caption{Mini-batch size: 2000}
         \label{fig:2DcompareDBC2000}
     \end{subfigure}
     \hfill
     \begin{subfigure}[b]{0.48\textwidth}
         \centering
         \includegraphics[width=\textwidth]{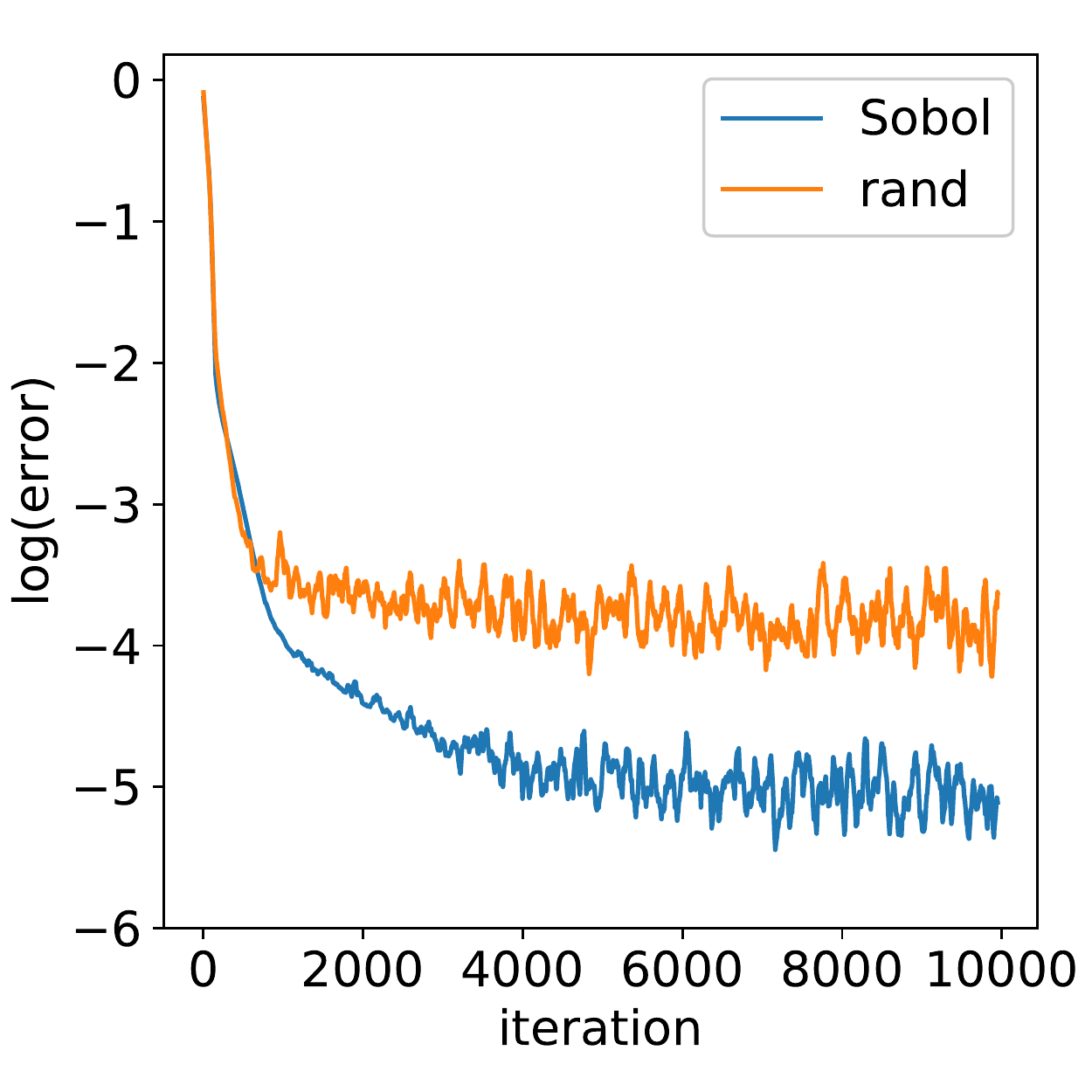}
         \caption{Mini-batch size: 4000}
         \label{fig:2DcompareDBC4000}
     \end{subfigure}
        \caption{Detained training processes of QMC and MC methods for different mini-batch sizes for Dirichlet problem in 2D. The log function here uses $e$ as base.}
        \label{fig:compare 2D DBC}
\end{figure}
Figure \ref{fig:comparebatchsizeonerrorDBC} plots relative $L^2$ error in terms of mini-batch size for QMC and MC methods for \eqref{equ:poisson equation with DBC} from 2D to 16D. A clear evidence is that QMC method always outperforms MC method and the error reduction is significant.
\begin{figure}
	\centering
	\begin{subfigure}[b]{0.48\textwidth}
		\centering
		\includegraphics[width=\textwidth]{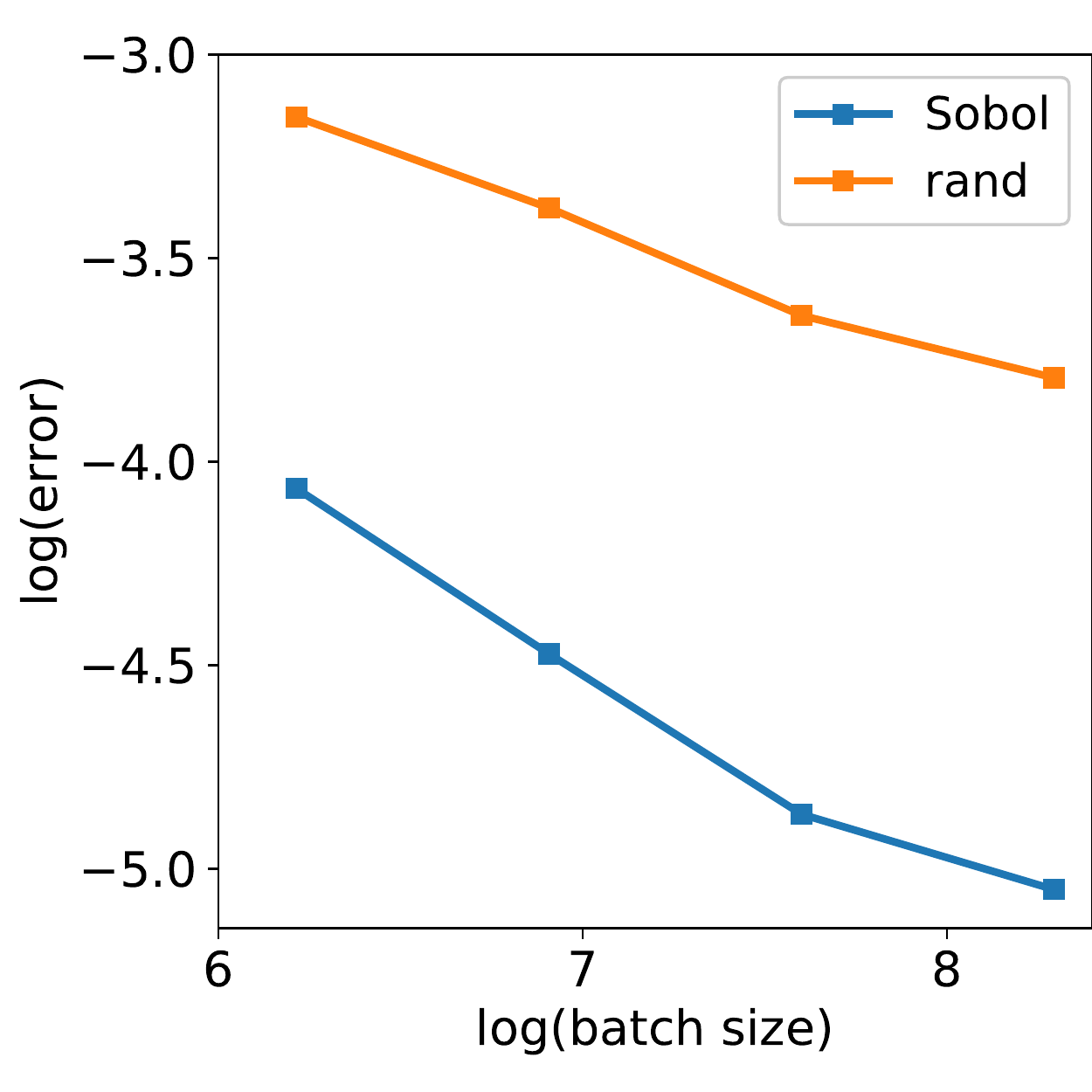}
		\caption{2D}
		\label{fig:2comparebatchsizeonerror}
	\end{subfigure}
	\hfill
	\begin{subfigure}[b]{0.48\textwidth}
		\centering
		\includegraphics[width=\textwidth]{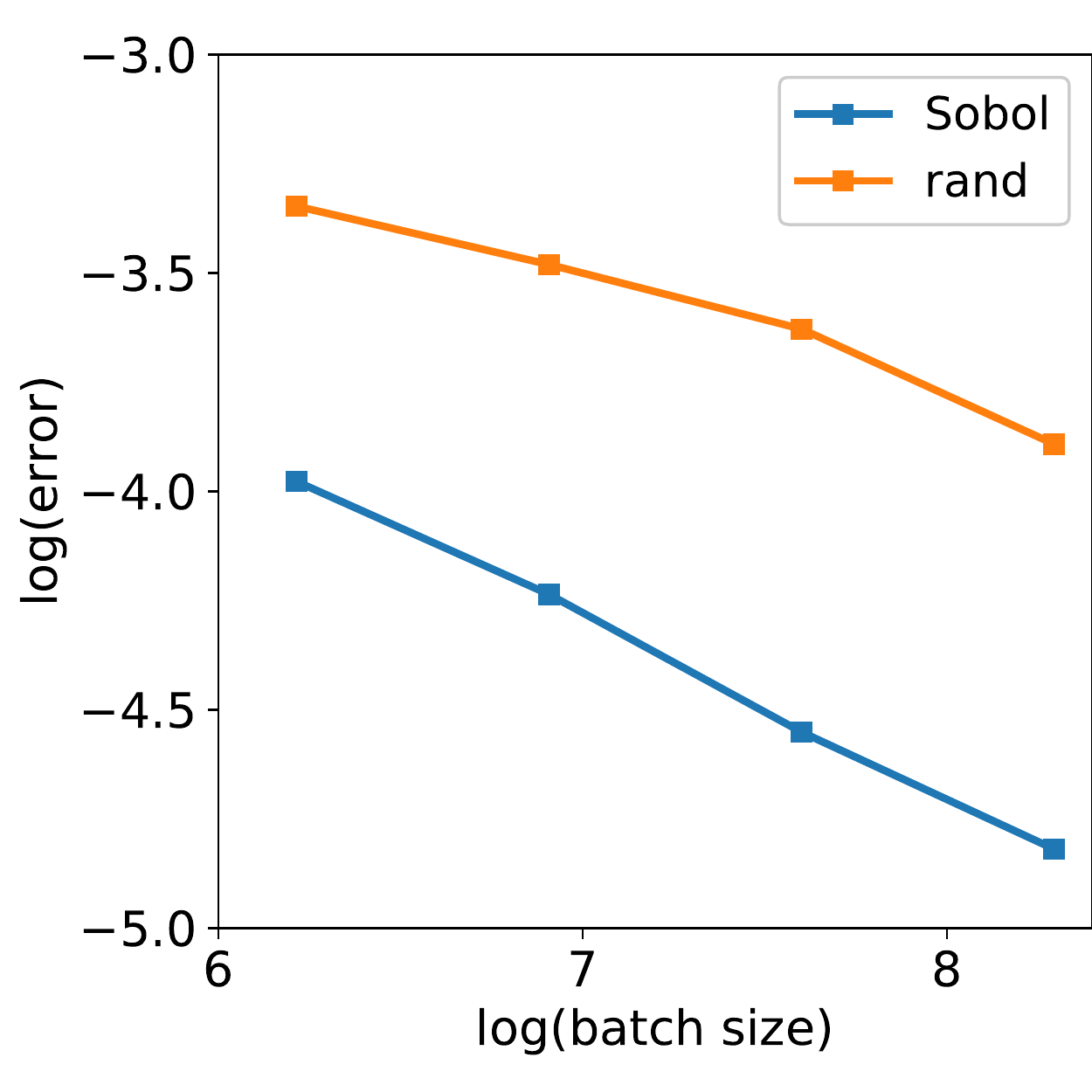}
		\caption{4D}
		\label{fig:4comparebatchsizeonerror}
	\end{subfigure}
	\begin{subfigure}[b]{0.48\textwidth}
		\centering
		\includegraphics[width=\textwidth]{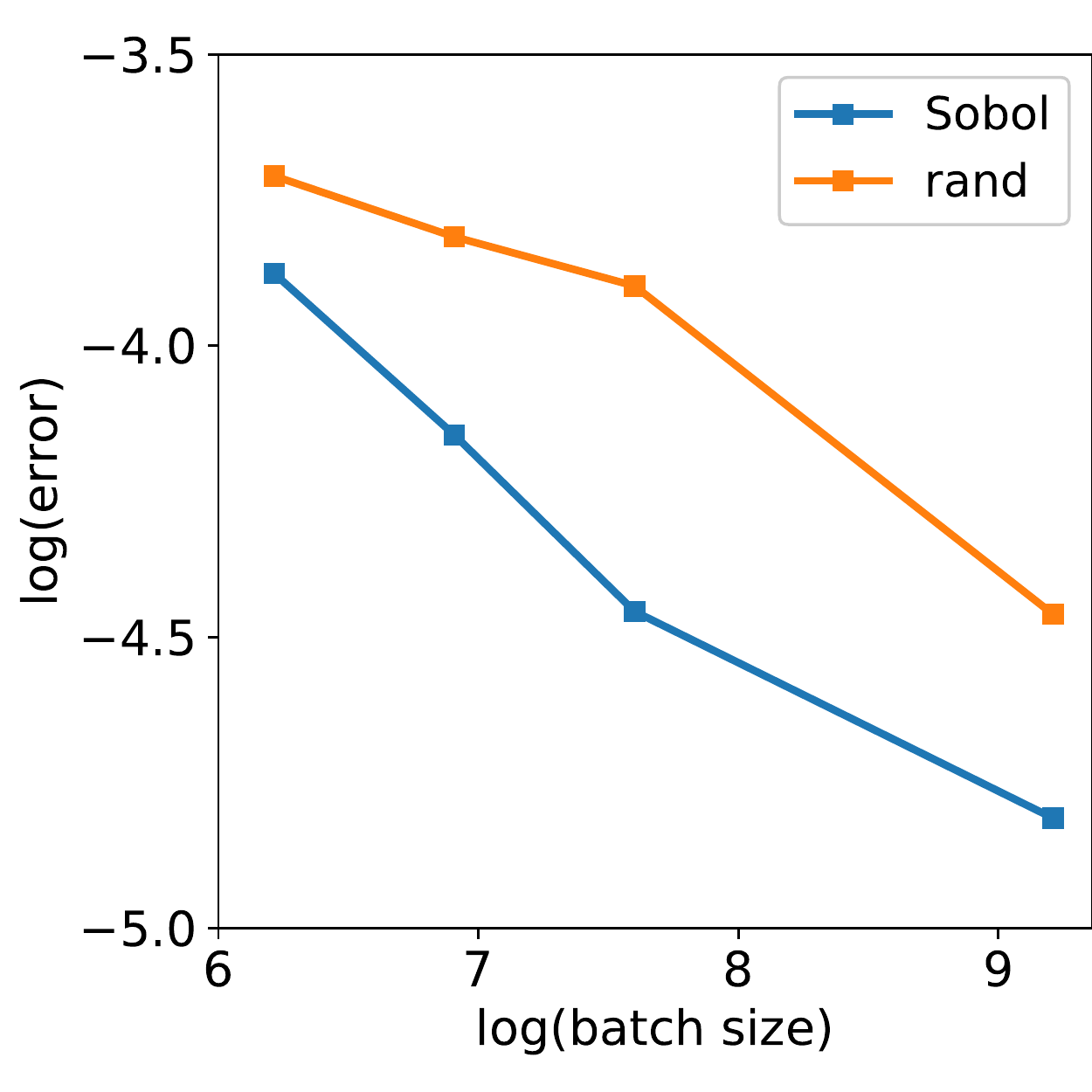}
		\caption{8D}
		\label{fig:8comparebatchsizeonerror}
	\end{subfigure}
	\hfill
	\begin{subfigure}[b]{0.48\textwidth}
		\centering
		\includegraphics[width=\textwidth]{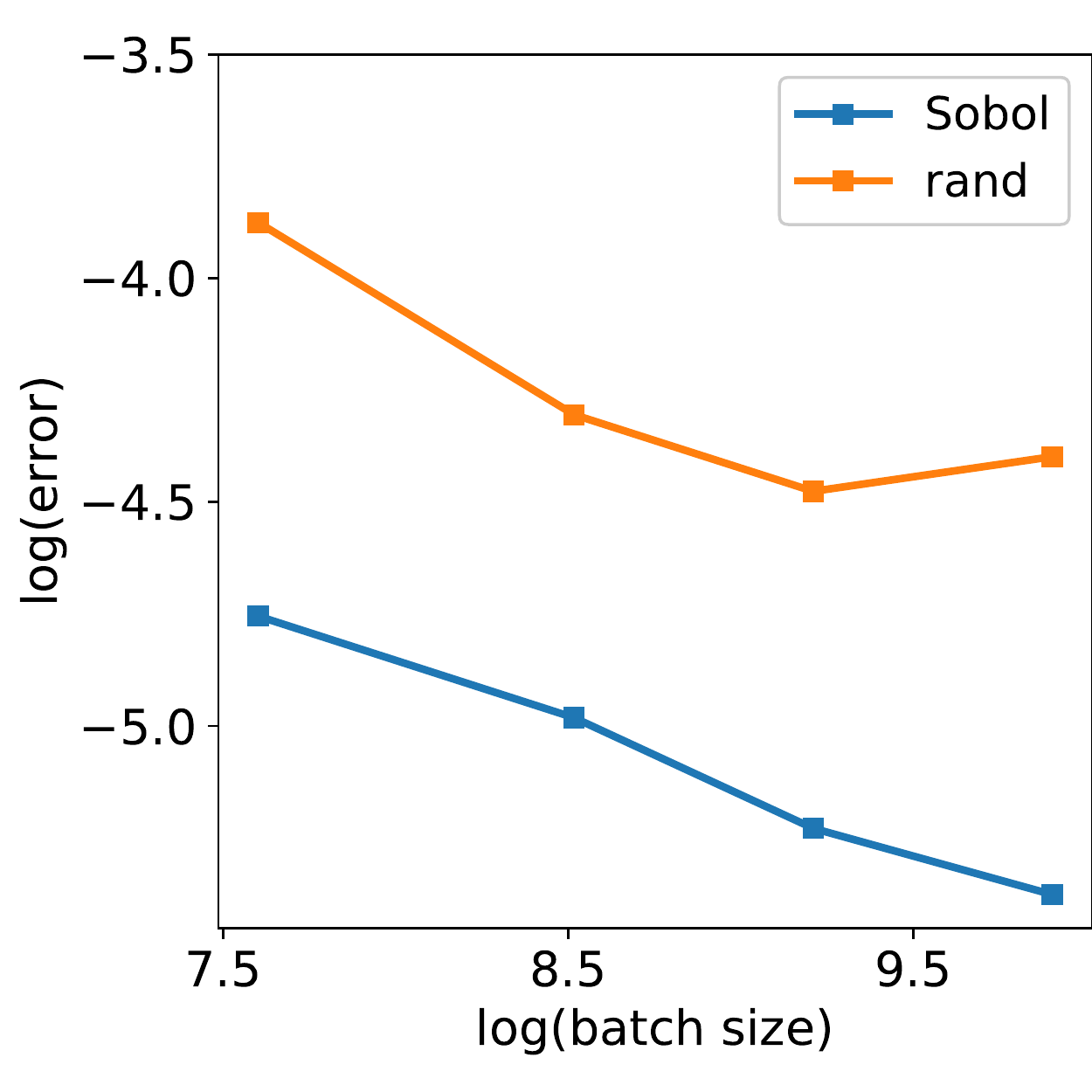}
		\caption{16D}
		\label{fig:16comparebatchsizeonerror}
	\end{subfigure}
	\caption{Relative $L^2$ error versus mini-batch size in QMC and MC methods for \eqref{equ:poisson equation with DBC} from 2D to 16D. The log function here uses $e$ as base.}
	\label{fig:comparebatchsizeonerrorDBC}
\end{figure}

From a different perspective, in order to achieve the same approximation accuracy, we may ask how much cheaper QMC method is compared to MC method. To do this, we fix the mini-batch size in MC method to be $10000$ and $100000$, and check the corresponding size in QMC method for the same accuracy requirement. Results are recorded in Table \ref{tab:comparison of MC and QMC of DBC}, and detailed training processes in 2D are plotted in Figure \ref{fig:compare 2D DBC same accuracy}.
For the same accuracy requirement, compared to MC method, QMC method reduces the size of training data set by more than two orders of magnitude.
\begin{table}[htbp]
\centering
\caption{Comparison of mini-batch sizes in QMC and MC methods for Dirichlet problem in different dimensions when the same approximation accuracy is required.}
\label{tab:comparison of MC and QMC of DBC}       
\begin{tabular}{lllll}
\hline\noalign{\smallskip}
\multirow{2}*{Dimension} &\multicolumn{2}{c}{MC} &\multicolumn{2}{c}{QMC}  \\
~&size& error($\times10^{-2}$)&size& error($\times10^{-2}$)\\
\noalign{\smallskip}\hline\noalign{\smallskip}
\multirow{2}*{2D} & 10000 & 1.3566 & 500 &1.7141 \\
~ & 100000 & 0.7702 & 2000 & 0.6149\\
\multirow{2}*{4D} & 10000 & 1.8462& 500 &1.8735   \\
~ & 100000 & 0.8986 &  4000 &0.8076 \\
\multirow{2}*{8D} & 10000 & 1.1551 & 2000 & 1.1607 \\
~ & 100000 & 0.8668 & 10000 &0.8139  \\
\multirow{2}*{16D} & 10000 & 1.1383 & 1000 & 1.0472 \\
~ & 100000 & 0.7993 & 5000 & 0.6863
 \\
\noalign{\smallskip}\hline
\end{tabular}
\end{table}

\begin{figure}
     \centering
     \begin{subfigure}[b]{0.48\textwidth}
         \centering
         \includegraphics[width=\textwidth]{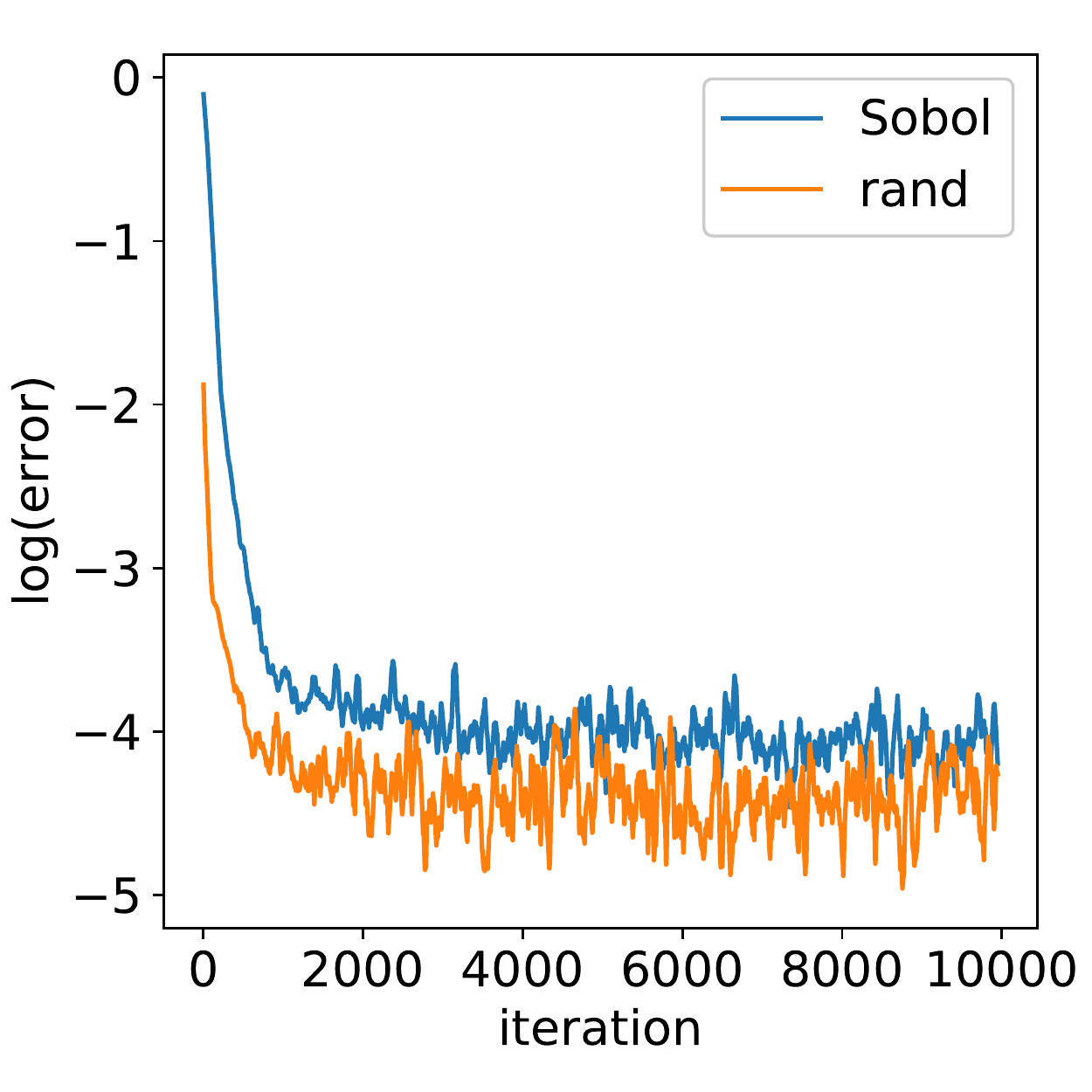}
         \caption{Mini-batch size: $10000$ (MC) and $500$ (QMC)}
         \label{fig:fig:compare 2D DBC same accuracy500}
     \end{subfigure}
     \hfill
     \begin{subfigure}[b]{0.48\textwidth}
         \centering
         \includegraphics[width=\textwidth]{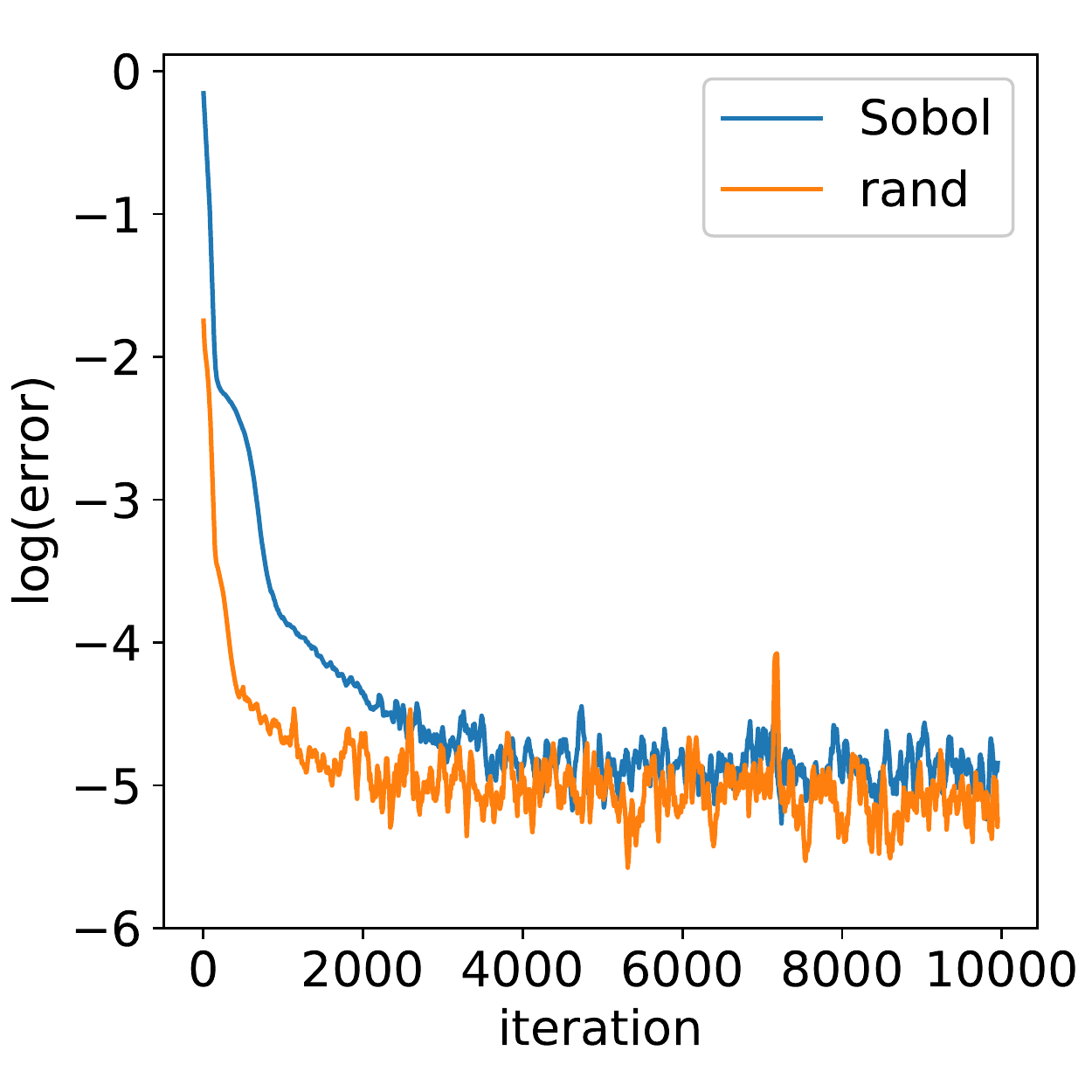}
         \caption{Mini-batch size: $100000$ (MC) and $2000$ (QMC)}
         \label{fig:compare 2D DBC same accuracy2000}
     \end{subfigure}
        \caption{Detailed training processes in QMC and MC methods for Dirichlet problem in 2D with different mini-batch sizes for the same accuracy requirement. The log function here uses $e$ as base.}
        \label{fig:compare 2D DBC same accuracy}
\end{figure}

All above results show that QMC method always performs better than MC method, typically by several times in terms of accuracy when the same mini-batch size is enforced or by more than two orders of magnitude in terms of efficiency when the same accuracy is required. However, their convergence rates with respect to both the iteration number and the size of training data set seem to be the same, due to the ADAM optimizer we used and the nonconvex nature of loss function.

\subsection{Neumann problem}
\label{sec:NBC}

Consider a Neumann problem over $\Omega=[0,1]^K$
\begin{equation}\label{equ:NBC problem}
\left\{
\begin{aligned}
  &-\Delta u + \pi^2 u = 2 \pi^2 \sum_{k}^{K} \cos (\pi x_k) & x\in \Omega\\
  &\left.\frac{\partial u}{\partial n} \right|_{\partial [0,1]^K}=0 & x\in\partial \Omega
  \end{aligned}
   \right.
\end{equation}
with the exact solution $u(x)=\sum_{k}\cos(\pi x_k)$ and $n$ being the unit outward normal vector. For this problem, we still do not need to add any penalty term for the boundary condition and the loss function is defined as
\begin{equation}\label{equ:loss for NBC}
  I(u)=\int_{\Omega}\left(\frac{1}{2}\left|\nabla u(x)\right|^2 +\pi^2 u(x)^2 -f(x)u(x) \right) \mathrm{d}x.
\end{equation}
The network structure is the same as before; see \eqref{equ:approximate u}. Detailed setup is listed in Table \ref{tab:detail of network NBC}. Exact and trained solutions in 2D are visualized in Figure \ref{fig:compare true with train NBC}. Relative $L_2$ errors in differential dimensions are recorded in Table \ref{tab:error of NBC} and the corresponding convergence rates are shown in Table \ref{tab:convergence error of NBC}.
\begin{table}[htbp]
\centering
\caption{Detailed setup of the neural network used for Neumann problem in different dimensions, where $m$ denotes the number of nodes contained in each layer.}
\label{tab:detail of network NBC}       
\begin{tabular}{llll}
\hline\noalign{\smallskip}
Dimension & Blocks & $m$ & Parameters  \\
\noalign{\smallskip}\hline\noalign{\smallskip}
2 & 4 & 10 & 921\\
4 & 4 & 15 &2011\\
8 & 4 & 30 &7741\\
16 & 5 &48 &24385\\
\noalign{\smallskip}\hline
\end{tabular}
\end{table}

\begin{figure}
     \centering
     \begin{subfigure}[b]{0.48\textwidth}
         \centering
         \includegraphics[width=\textwidth]{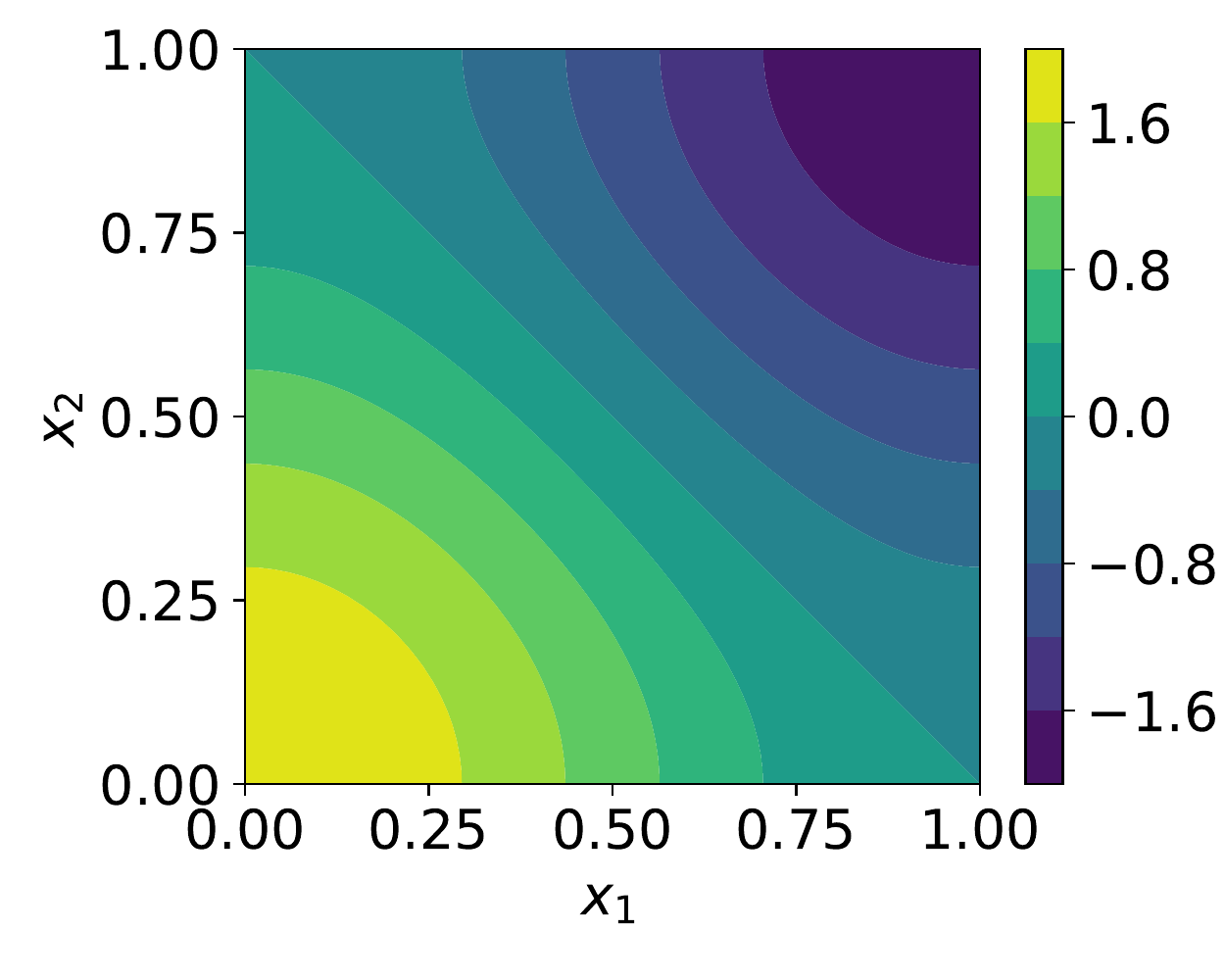}
         \caption{Exact solution}
         \label{fig:train_solution_NBC}
     \end{subfigure}
     \hfill
     \begin{subfigure}[b]{0.48\textwidth}
         \centering
         \includegraphics[width=\textwidth]{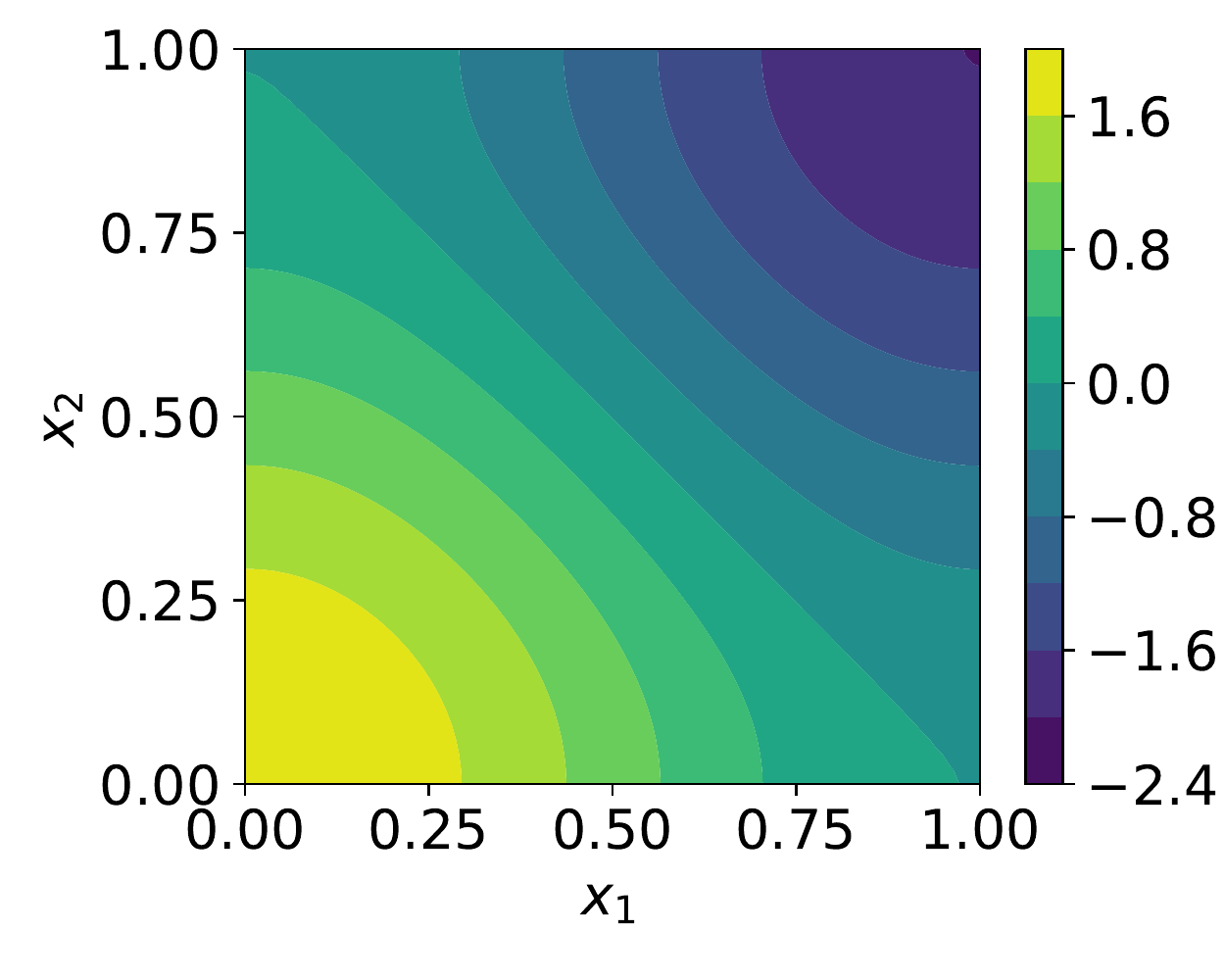}
         \caption{Trained solution}
         \label{fig:true_solution_NBC}
     \end{subfigure}
        \caption{Exact and trained solutions to problem \eqref{equ:NBC problem} in 2D. The exact solution $u(x)=\sum_{k=1}^2\cos(\pi x_k)$ and the approximate solution is trained with 5000 points (Sobol sequence) used at each iteration.}
        \label{fig:compare true with train NBC}
\end{figure}

\begin{table}[htbp]
\centering
\caption{Relative $L^2$ errors in different dimensions with different mini-batch sizes for Neumann problem and the onvergence order is recored with respect to $1/N$.}
\label{tab:error of NBC}       
\begin{tabular}{llllll}
\hline\noalign{\smallskip}
\multirow{2}*{Dimension} &\multirow{2}*{mini-batch size}&\multicolumn{2}{c}{QMC} &\multicolumn{2}{c}{MC}  \\
~&~&\multicolumn{1}{c}{error($\times10^{-2}$)}& \multicolumn{1}{c}{order}&\multicolumn{1}{c}{error($\times10^{-2}$)}& \multicolumn{1}{c}{order}\\
\noalign{\smallskip}\hline\noalign{\smallskip}
\multirow{4}*{2D}
    & 250 & 1.1218 & & 2.9571\\
~ & 500 & 0.7484   & 0.58 & 2.6754 & 0.14  \\
~ & 1000 & 0.4314 & 0.79 & 2.2572 & 0.25\\
~ & 2000 & 0.3140 & 0.46 & 1.9914 & 0.18\\
\hline
\multirow{4}*{4D} & 500 &
1.4219 & &3.9305\\
~ & 1000 & 0.9185 & 0.63 & 3.6990& 0.09\\
~ & 2000 & 0.3289 & 1.48 & 2.4779& 0.58\\
~ & 4000 & 0.2649 & 0.24  & 2.0152 & 0.23 \\
\hline
\multirow{4}*{8D} & 500 &
3.8542& &7.7874\\
~ & 1000 & 2.7353 & 0.49 &6.2379 &0.32\\
~ & 2000 & 2.4249 &  0.17  &5.4659 &0.19 \\
~ & 10000 & 1.8235 &  0.18 &2.6860 &0.44 \\
\hline
\multirow{4}*{16D} & 1000 & 3.2668 & &
6.0573\\
~&2000& 2.9308 &  0.16 &5.8566 &0.05\\
~&5000 & 2.9205 & 0.01 &4.8035 &0.22\\
~&10000& 1.5636 &  0.90 &3.8518 &0.32\\
\noalign{\smallskip}\hline
\end{tabular}
\end{table}
\begin{table}[htbp]
	\centering
	\caption{Fitted convergence rates of the relative $L^2$ error with respect to the mini-batch size in different dimensions (recorded in terms of $1/N$) for Neumann problem.}
	\label{tab:convergence error of NBC}       
	\begin{tabular}{llll}
		\hline\noalign{\smallskip}
		Dimension & QMC & MC  \\
		\noalign{\smallskip}\hline\noalign{\smallskip}
		2D & 0.63 & 0.20\\
		4D & 0.78 & 0.32\\
		8D & 0.23 & 0.35\\
		16D&0.28 & 0.20 \\
		\noalign{\smallskip}\hline\noalign{\smallskip}
	\end{tabular}
\end{table}

Figure \ref{fig:compare true with train NBC} plots detailed training processes of QMC and MC methods for different mini-batch sizes in 2D. Similarly, QMC method outperforms MC method by several times in terms of accuracy for the same size of training data set.
\begin{figure}
     \centering
     \begin{subfigure}[b]{0.48\textwidth}
         \centering
         \includegraphics[width=\textwidth]{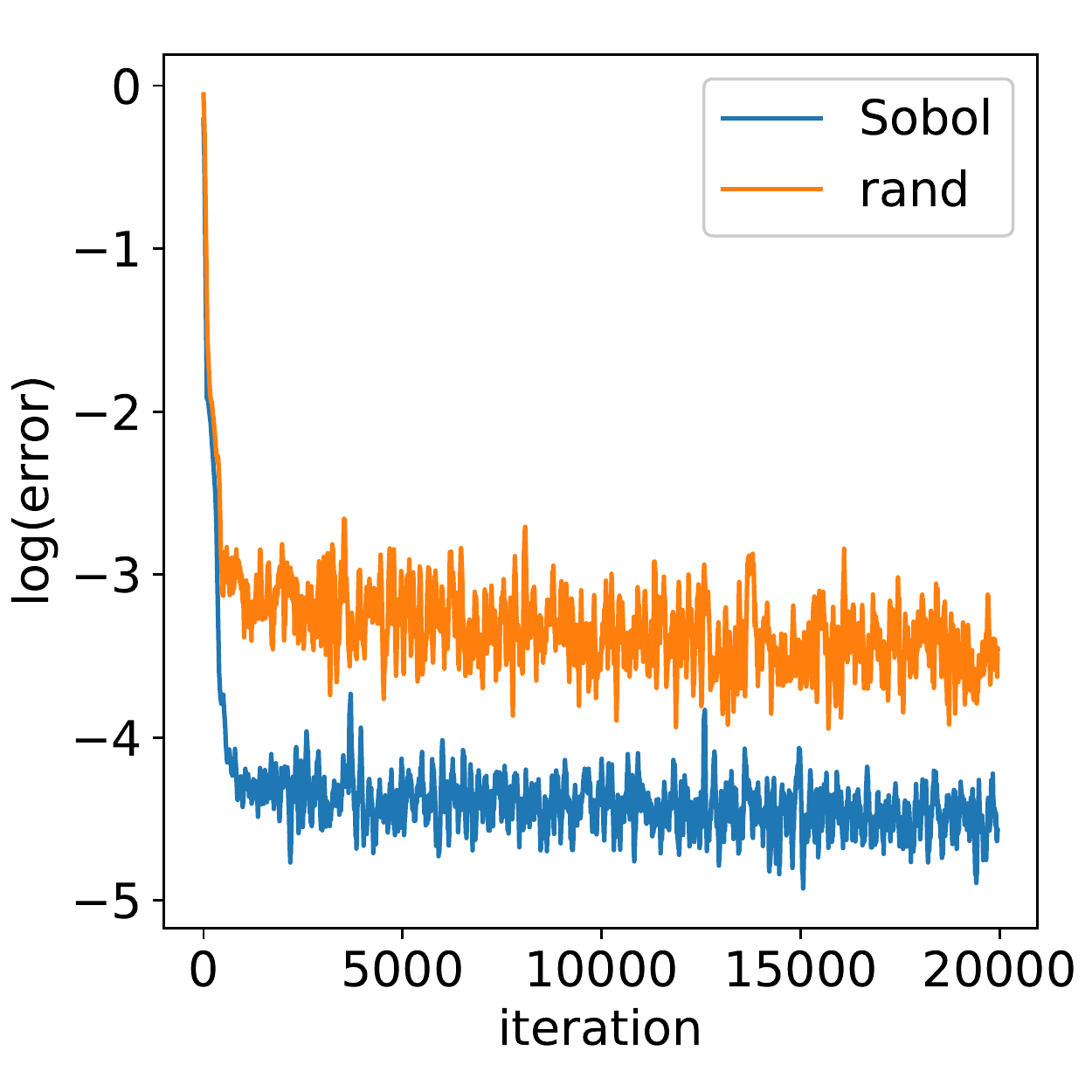}
         \caption{Mini-batch size: $250$}
         \label{fig:2DcompareNBC250}
     \end{subfigure}
     \hfill
     \begin{subfigure}[b]{0.48\textwidth}
         \centering
         \includegraphics[width=\textwidth]{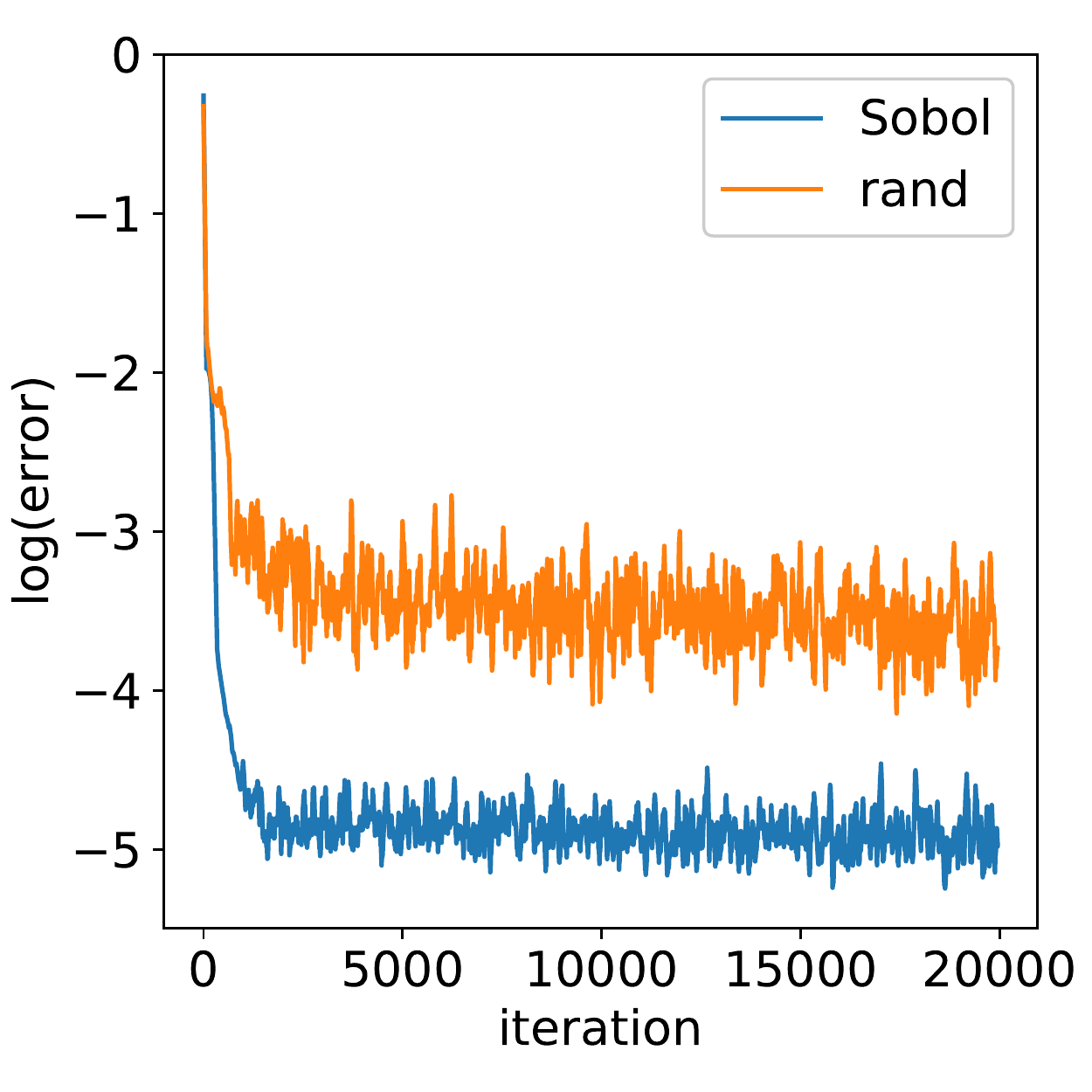}
         \caption{Mini-batch size: $500$}
         \label{fig:2DcompareNBC500}
     \end{subfigure}
          \hfill
     \begin{subfigure}[b]{0.48\textwidth}
         \centering
         \includegraphics[width=\textwidth]{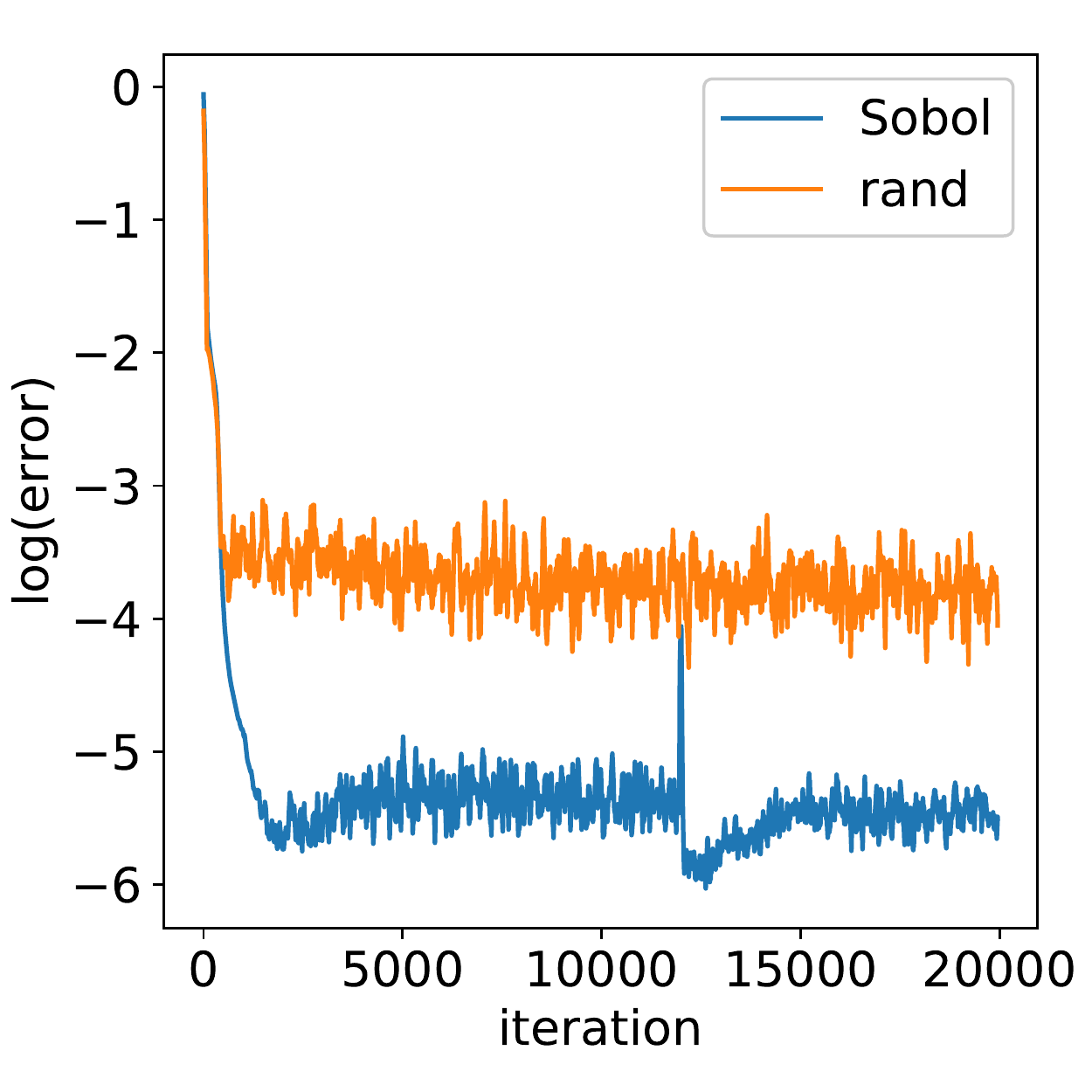}
         \caption{Mini-batch size: $1000$}
         \label{fig:2DcompareNBC1000}
     \end{subfigure}
          \hfill
     \begin{subfigure}[b]{0.48\textwidth}
         \centering
         \includegraphics[width=\textwidth]{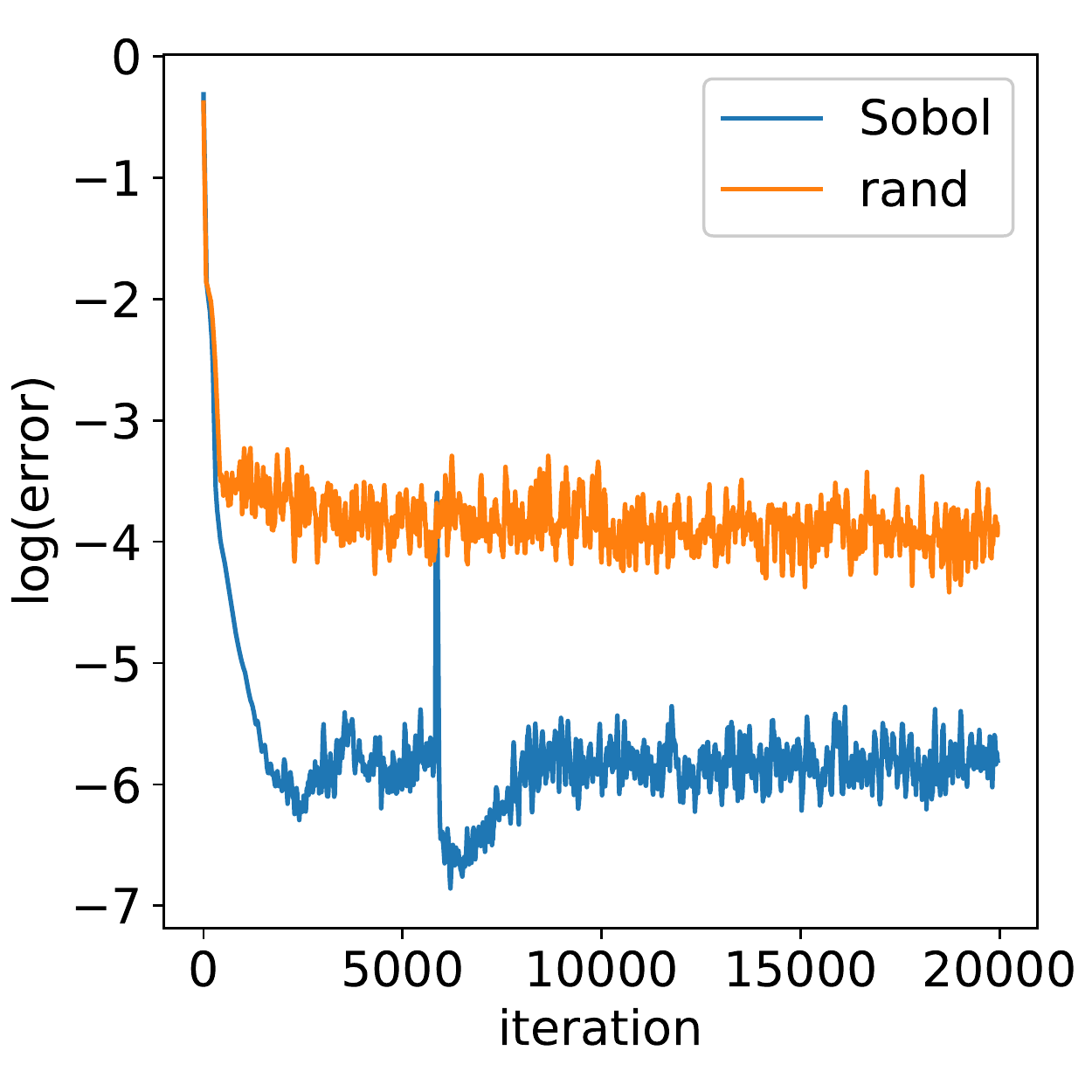}
         \caption{Mini-batch size: $2000$}
         \label{fig:2DcompareNBC2000}
     \end{subfigure}
        \caption{Detailed training processes of QMC and MC methods for different mini-batch sizes for Neumann problem in 2D. The log function here uses $e$ as base.}
        \label{fig:compare 2D NBC}
\end{figure}
Besides, for the same accuracy requirement, QMC reduces the size of training data set by orders of magnitudes; see Table \ref{tab:comparison of MC and QMC of NBC} as well as Figure \ref{fig:compare 2D NBC same accuracy} for details.
\begin{figure}
     \centering
     \begin{subfigure}[b]{0.48\textwidth}
         \centering
         \includegraphics[width=\textwidth]{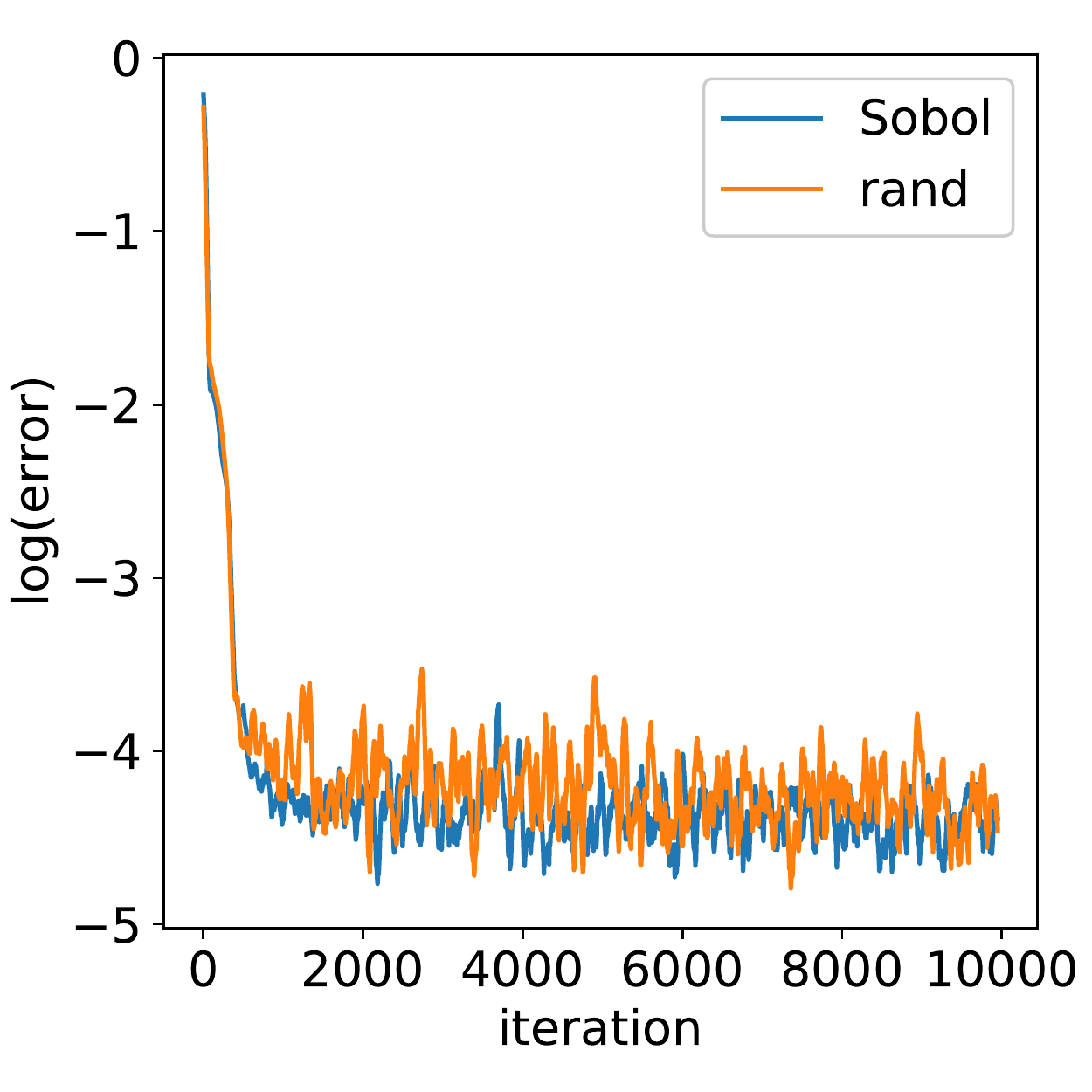}
         \caption{Mini-batch size: $10000$ (MC) and $250$ (QMC)}
         \label{fig:fig:compare 2D NBC same accuracy500}
     \end{subfigure}
     \hfill
     \begin{subfigure}[b]{0.48\textwidth}
         \centering
         \includegraphics[width=\textwidth]{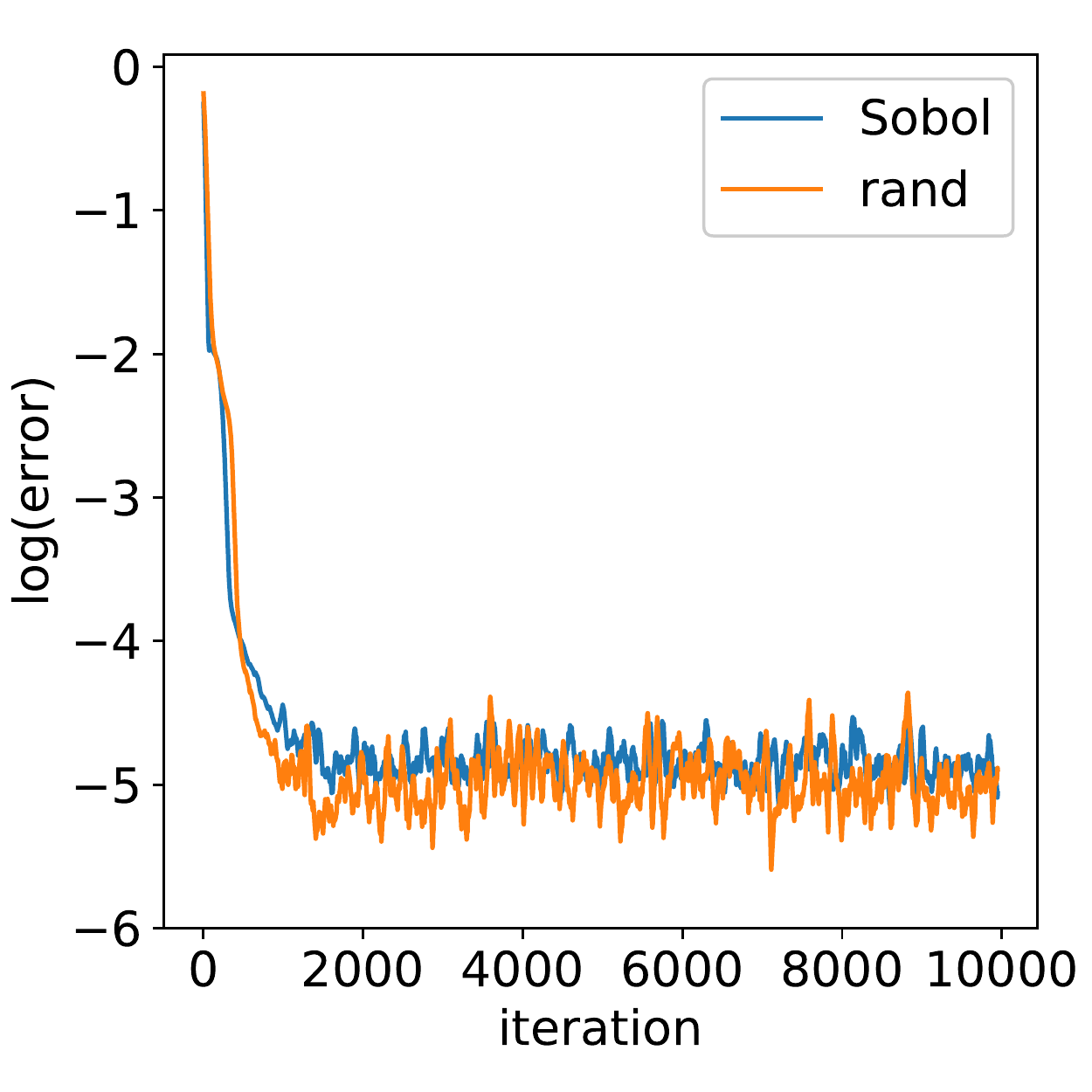}
         \caption{Mini-batch size: $100000$ (MC) and $500$ (QMC)}
         \label{fig:compare 2D NBC same accuracy500}
     \end{subfigure}
        \caption{Detailed training processes in QMC and MC methods for Neumann problem in 2D with different mini-batch sizes for the same accuracy requirement. The log function here uses $e$ as base.}
        \label{fig:compare 2D NBC same accuracy}
\end{figure}
\begin{table}[htbp]
\centering
\caption{Comparison of mini-batch sizes in QMC and MC methods for Neumann problem in different dimensions when the same approximation accuracy is required.}
\label{tab:comparison of MC and QMC of NBC}       
\begin{tabular}{lllll}
\hline\noalign{\smallskip}
\multirow{2}*{Dimension} &\multicolumn{2}{c}{MC} &\multicolumn{2}{c}{QMC}  \\
~&size& error($\times10^{-2}$)&size& error($\times10^{-2}$)\\
\noalign{\smallskip}\hline\noalign{\smallskip}
\multirow{2}*{2D} &10000 & 1.3138 & 250 & 1.1218 \\
~&100000 & 0.6536
& 500 &	0.7484\\
\multirow{2}*{4D}
&10000	& 2.0778
&500	& 1.4218 \\
~ & 100000 & 0.4622
&2000 &	0.3289 \\
\multirow{2}*{8D}
&10000	& 2.6860
&1000 & 2.7353 \\
~ & 100000	& 2.5154
&2000	 & 2.4249 \\
\multirow{2}*{16D}
&10000 &	3.8518
&1000	&  3.2668  \\
~ & 100000 &	2.7805
&5000   &	2.9205 \\
\noalign{\smallskip}\hline
\end{tabular}
\end{table}

Generally speaking, not every Neumann problem can be modeled by a loss function without the penalty term on the boundary. Therefore, for a general Neumann problem of the form
\begin{equation}\label{equ:general NBC}
\left\{
  \begin{aligned}
  -\Delta u + \pi^2 u  & = f(x) &x \in \Omega,\\
  \frac{\partial u}{\partial x} & = g(x) & x\in \partial \Omega.
  \end{aligned}
\right.
\end{equation}
we add the penalty term into the loss function \eqref{equ:loss for NBC}
\begin{equation}\label{equ:loss for general}
  \begin{aligned}
  \mathrm{loss} = I(u) +\beta\int_{\partial \Omega} \left(\frac{\partial u}{\partial x} - g(x)\right)^2 \mathrm{d}x.
  \end{aligned}
\end{equation}
The first term is a volume integral while the second term is a boundary integral. Therefore, we have to sample these two terms separately: one
for $I(u)$ in $\Omega$ and the other for the penalty term on the boundary. In principle, we need to optimize sizes of both training sets in order to minimize the approximation error. Practically, we find that QMC method always performs better than MC method. In Table \ref{tab:error of NBC with penalt term}, we show relative $L^2$ errors in different dimensions with different mini-batch sizes for Neumann problem with the penalty term on the boundary.
\begin{table}[htbp]
\centering
\caption{Relative $L^2$ errors in different dimensions with different mini-batch sizes for Neumann problem with the penalty term on the boundary.}
\label{tab:error of NBC with penalt term}       
\begin{tabular}{llllll}
\hline\noalign{\smallskip}
\multirow{2}*{Dimension} &\multicolumn{2}{c}{Mini-batch size} &\multicolumn{2}{c}{Relative $L^2$ error}  \\
~&Volume & Boundary &QMC($\times10^{-2}$) & MC($\times10^{-2}$)\\
\noalign{\smallskip}\hline\noalign{\smallskip}
\multirow{4}*{2D}& 250 & 100&0.8079
 & 1.6151\\
 ~& 500 & 100 & 0.4176 & 1.5027\\
 ~ & 1000 &  100 &0.2086 & 1.0101\\
 ~ & 2000 & 100 & 0.1470 & 0.8327\\
\hline
\multirow{4}*{4D} & 500 & 100&
0.4434 & 1.7622\\
~& 1000 & 100 & 0.3729 & 1.2550\\
~ & 2000 & 100& 0.3160 & 0.9799\\
~ & 4000 &  100& 0.2651 & 0.7530\\
\hline
\multirow{4}*{8D} & 500 & 100& 1.1770 & 3.8285\\
~ & 1000 & 100 & 0.8989 & 3.0505\\
~ & 2000 & 100 & 0.8780 & 2.9661\\
~ & 10000 & 100 & 0.7643 & 1.6552\\
\hline
\multirow{4}*{16D} &1000 &100&1.2181&
3.1398\\
~ & 2000 & 100 &1.1250 &2.2055\\
~ & 5000 & 100 & 0.8998 & 1.9235\\
~ & 10000 & 100 &0.7698 & 1.5244\\
\noalign{\smallskip}\hline
\end{tabular}
\end{table}
Figure \ref{fig:compare 2D NBC with penalty term} plots detailed training processes of QMC and MC methods for different mini-batch sizes for Neumann problem with the penalty term on the boundary in 2D. Size of the training data set for the penalty term is fixed to be $100$ in all cases.
\begin{figure}
     \centering
     \begin{subfigure}[b]{0.48\textwidth}
         \centering
         \includegraphics[width=\textwidth]{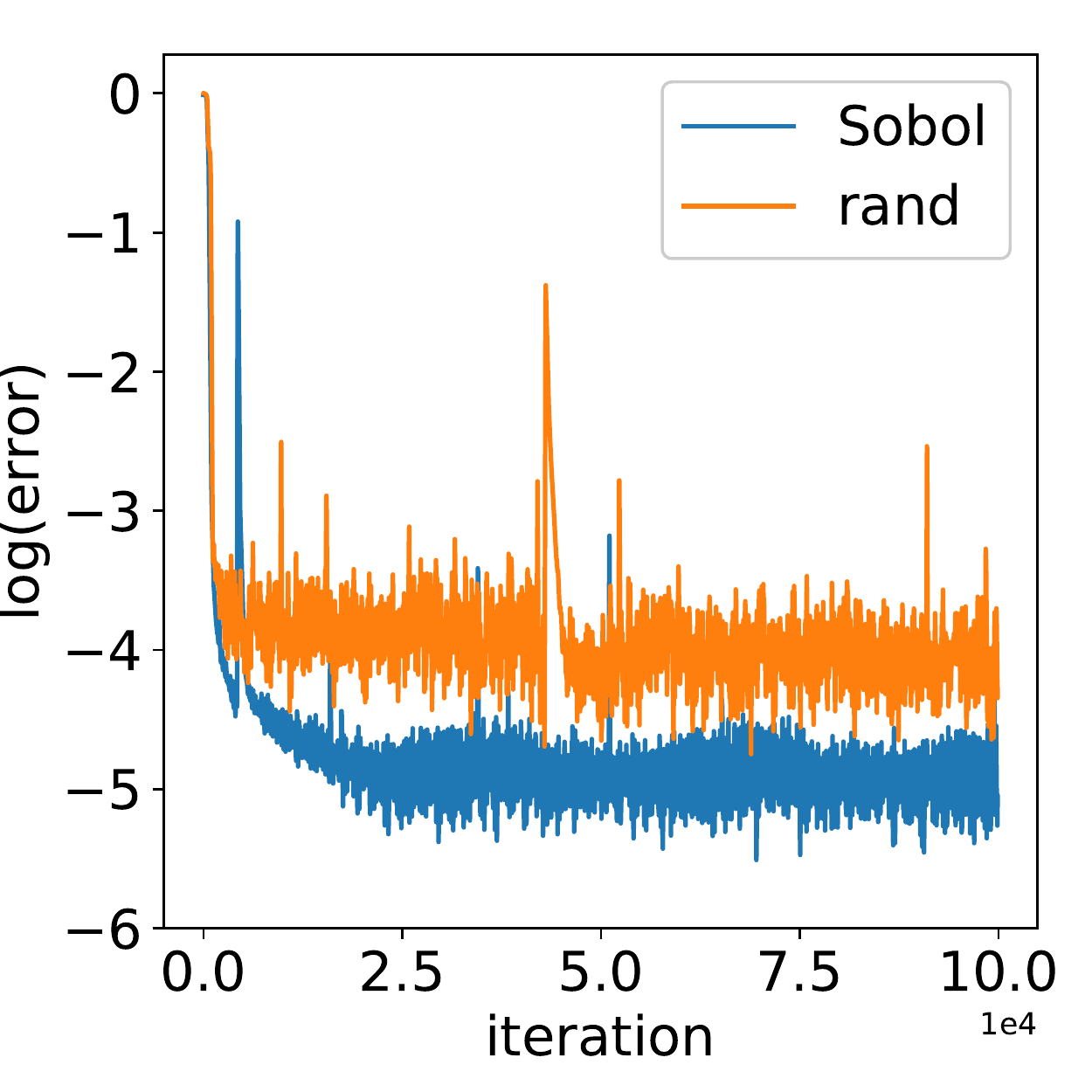}
         \caption{Mini-batch size: $250$}
         \label{fig:compare 2D NBC with penalty term250}
     \end{subfigure}
     \hfill
     \begin{subfigure}[b]{0.48\textwidth}
         \centering
         \includegraphics[width=\textwidth]{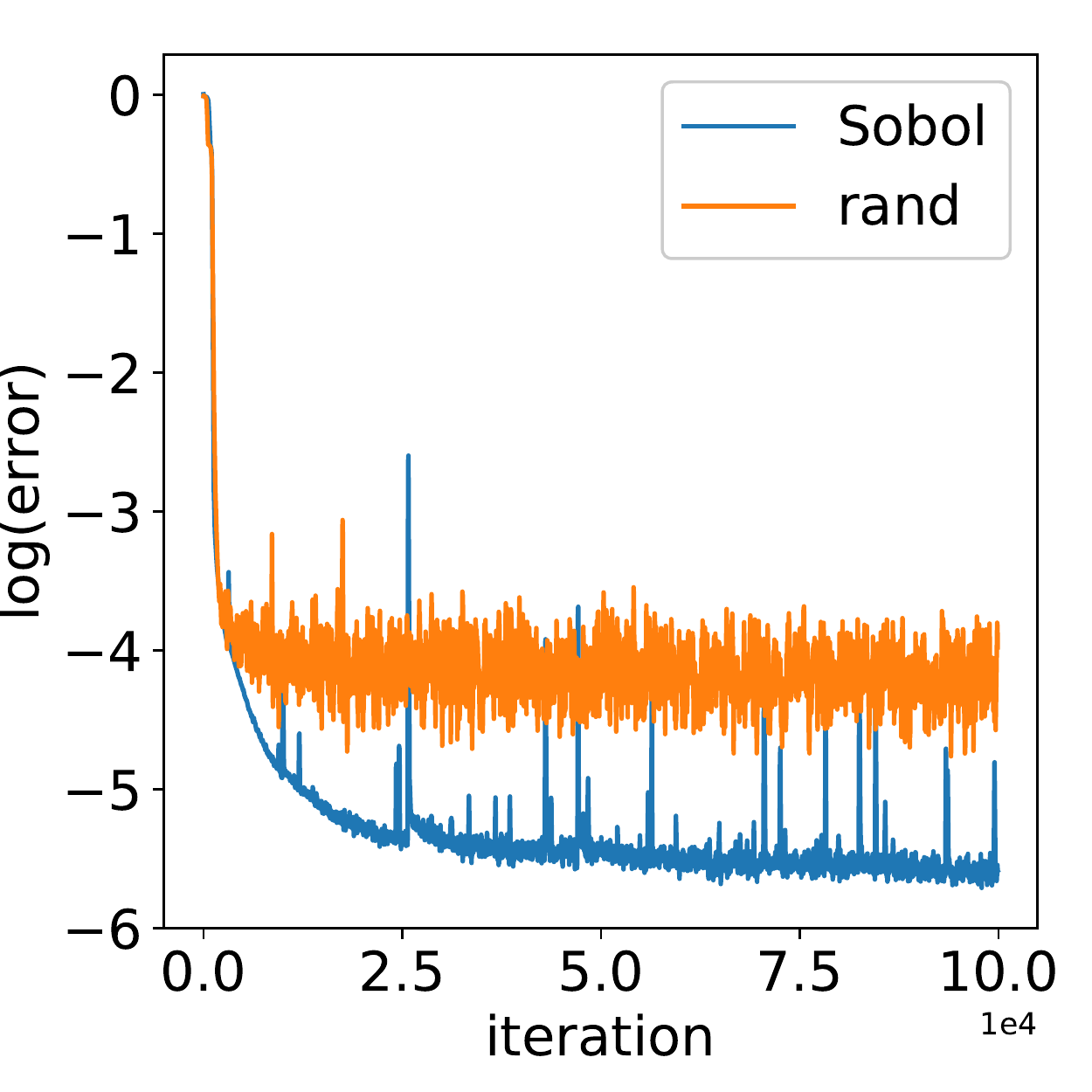}
         \caption{Mini-batch size: $500$}
         \label{fig:compare 2D NBC with penalty term500}
     \end{subfigure}
          \hfill
     \begin{subfigure}[b]{0.48\textwidth}
         \centering
         \includegraphics[width=\textwidth]{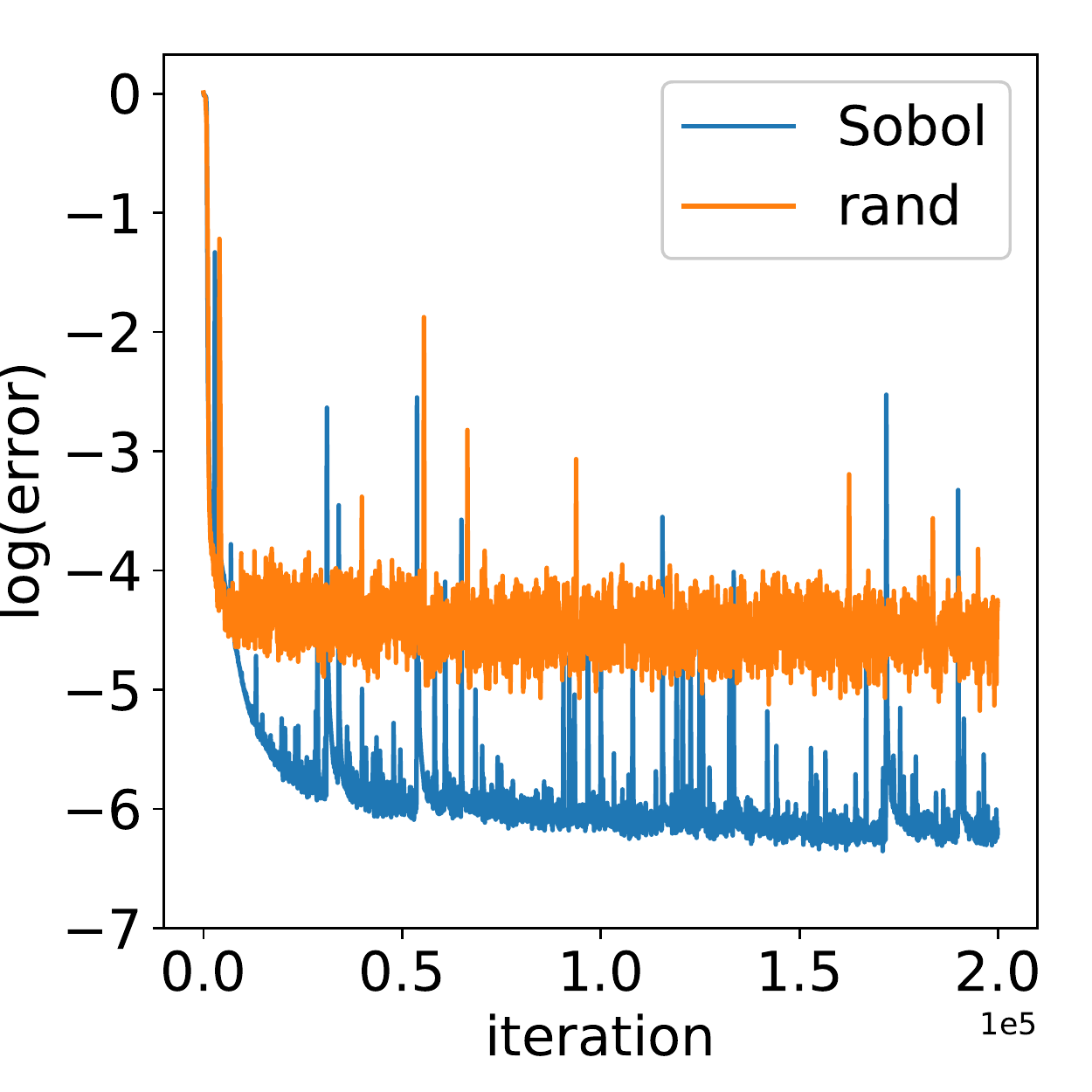}
         \caption{Mini-batch size: $1000$}
         \label{fig:compare 2D NBC with penalty term1000}
     \end{subfigure}
          \hfill
     \begin{subfigure}[b]{0.48\textwidth}
         \centering
         \includegraphics[width=\textwidth]{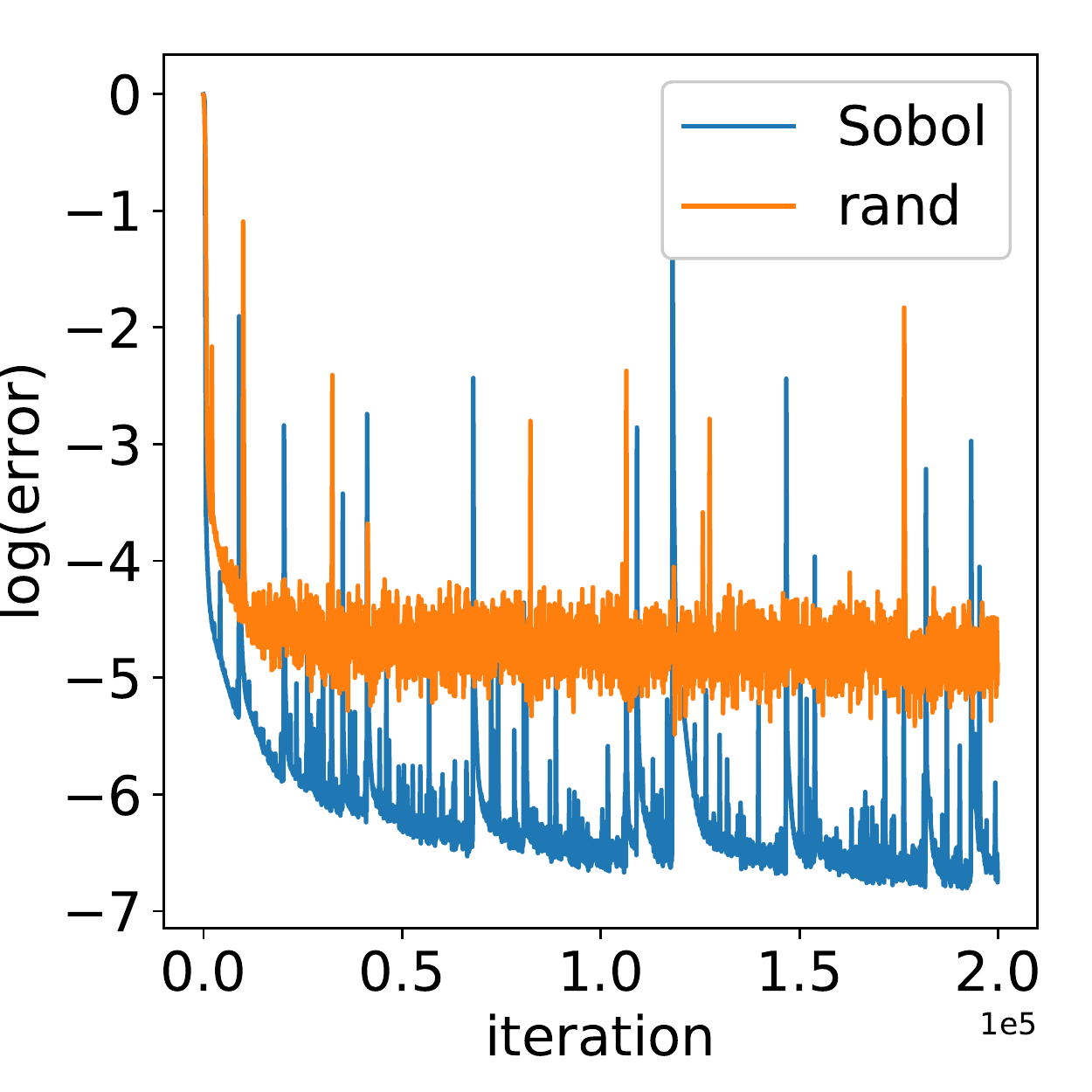}
         \caption{Mini-batch size: $2000$}
         \label{fig:compare 2D NBC with penalty term2000}
     \end{subfigure}
        \caption{Detailed training processes of QMC and MC methods for different mini-batch sizes for Neumann problem with the penalty term over the boundary in 2D. The log function here uses $e$ as base.}
        \label{fig:compare 2D NBC with penalty term}
\end{figure}

\section{Convergence analysis}\label{sec:analysis}

To understand numerical results in \secref{sec:numerical results} from a theoretical perspective, in this section, we shall analyze the convergence behavior of SGD method with QMC sampling and understand how the size of training data set affects the convergence for different sampling strategies. Our analysis follows \cite{Bottou2018optimization}, but differs in terms of assumptions. Under the boundedness assumption of parameter sequence, we prove the Lipschitz continuity of loss function for smooth activation functions, such as \eqref{equ:activation function}, instead of the direct assumption of Lipschitz
continuity of loss function. From a practical perspective, our assumption can be easily verified during the iteration while the Lipschitz continuity is difficult to be verified. Moreover, we prove the convergence of function sequences generated by MC and QMC methods under a stronger assumption on the second moment of the stochastic vector used in the SGD method, which explicitly characterizes the dependence of convergence rate on both the size of training data set and the iteration number.

For convenience, derivatives are taken with respect to the parameter set if subscripts are not specified in what follows. We start with the following assumption.
\begin{assumption}[Boundedness of the iteration sequence]\label{assumption:bound of theta}
  The iteration sequence $\{\theta_i\}_{i=0}^{\infty}$ is bounded, i.e., $\forall i\in\mathbb{N}^+$,
  \begin{equation*}
    \norm{\theta_i}_2 \leq C_{\theta}.
  \end{equation*}
\end{assumption}
A direct consequence of Assumption \ref{assumption:bound of theta} is the existence of convergent sub-sequences. It will be shown later that the whole sequence of function values generated by the SGD method with a sampling strategy converges if the loss function is strongly convex with respect to $\theta$. In practice, this assumption can be verified easily in the iteration. Furthermore, we choose a $C^{\infty}$ activation function $\sigma(x)$; see \eqref{equ:activation function}. Consequently, the residual network also belongs to $C^{\infty}$ with respect to both $\theta$ and $x$. Therefore, the Lipschitz continuity of the approximate solution by the neural network is guaranteed in a bounded domain.
\begin{lem}\label{lem:L-continuous gradient}
Under Assumption \ref{assumption:bound of theta}, the $C^{\infty}$ activation function, and the bounded domain $\Omega$, there exist constants $L_0$, $L_1$ and $L_2$, $\forall  \theta, \bar{\theta}\in\{\theta_i\}_{i=1}^{\infty}$ and $x\in\Omega$ such that
\begin{equation*}
  \begin{aligned}
    \norm{\nabla\hat{u}_{\theta} - \nabla\hat{u}_{\bar{\theta}}}_2 & \leq L_0\norm{\theta - \bar{\theta}}_2, \\
    \norm{\nabla_x\hat{u}_{\theta} - \nabla_x\hat{u}_{\bar{\theta}}}_2 & \leq L_1\norm{\theta - \bar{\theta}}_2, \\
    \norm{\nabla\nabla_x\hat{u}_{\theta} - \nabla\nabla_x\hat{u}_{\bar{\theta}}}_2 & \leq L_2\norm{\theta - \bar{\theta}}_2.
  \end{aligned}
\end{equation*}
\end{lem}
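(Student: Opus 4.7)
The plan is to reduce all three Lipschitz estimates to a single smoothness-plus-compactness argument. For any fixed $x \in \Omega$, each of the maps
\[
\theta \mapsto \nabla_\theta \hat u_\theta(x), \qquad \theta \mapsto \nabla_x \hat u_\theta(x), \qquad \theta \mapsto \nabla_\theta \nabla_x \hat u_\theta(x)
\]
is continuously differentiable in $\theta$, so the mean value inequality
\[
\|F(\theta) - F(\bar\theta)\|_2 \;\leq\; \Bigl(\sup_{\tau \in [\theta,\bar\theta]} \|\nabla_\theta F(\tau)\|_2\Bigr)\,\|\theta - \bar\theta\|_2
\]
reduces each of the three bounds to a uniform (in both $\theta$ and $x$) estimate for $\nabla_\theta^2 \hat u_\theta$, $\nabla_\theta \nabla_x \hat u_\theta$, and $\nabla_\theta^2 \nabla_x \hat u_\theta$, respectively.

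The crucial observation is that $\hat u_\theta(x)$ is built by composing only $C^\infty$ ingredients: affine maps in $(\theta,x)$, the identity shortcut in \eqref{equ:ith block}, the swish activation $\sigma$ of \eqref{equ:activation function}, and the smooth boundary multiplier $A(x)$ and shift $B(x)$ in \eqref{equ:build for DBC}. Hence $\hat u_\theta(x) \in C^\infty(\mathbb{R}^p \times \overline{\Omega})$, where $p$ is the total number of network parameters, and every mixed partial derivative with respect to any combination of $\theta$ and $x$ is itself a continuous function on $\mathbb{R}^p \times \overline{\Omega}$.

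Next I restrict to the compact product set $K := \overline{B_{C_\theta}} \times \overline{\Omega}$, where $\overline{B_{C_\theta}}$ is the closed Euclidean ball of radius $C_\theta$ in parameter space; this set is convex in the first factor, contains every iterate $\theta_i$ by Assumption \ref{assumption:bound of theta}, and is compact because $\Omega$ is bounded. On $K$ the three continuous maps
\[
(\theta,x)\mapsto \nabla_\theta^2 \hat u_\theta(x), \quad (\theta,x)\mapsto \nabla_\theta \nabla_x \hat u_\theta(x), \quad (\theta,x)\mapsto \nabla_\theta^2 \nabla_x \hat u_\theta(x)
\]
attain finite suprema, which I take as $L_0$, $L_1$, and $L_2$. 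Since the segment $[\theta,\bar\theta] \subset \overline{B_{C_\theta}}$ for any two iterates by convexity, applying the mean value inequality along this segment yields the three stated bounds uniformly in $x \in \Omega$.

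There is no genuinely hard step here; the argument is essentially the observation that a smooth function on a compact set has bounded derivatives. The only items worth flagging are that the constants $L_0,L_1,L_2$ depend implicitly on $C_\theta$, on $\overline{\Omega}$, and on the (fixed) network architecture (number of blocks and width $m$), and that convexity of $\overline{B_{C_\theta}}$ is what allows the mean value inequality to be invoked between arbitrary iterates. The same template will extend to any other smooth functional of $\hat u_\theta$ encountered later in the convergence analysis.
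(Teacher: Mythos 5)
Your argument is correct and is essentially the same as the paper's, which offers no formal proof but justifies the lemma by the single observation that a $C^{\infty}$ network on a bounded domain with bounded parameters has Lipschitz derivatives. Your elaboration via the mean value inequality on the convex compact set $\overline{B_{C_\theta}}\times\overline{\Omega}$ is a faithful and complete filling-in of that same reasoning.
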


A natural corollary of \lemref{lem:L-continuous gradient} is that there exist constants $M_1$ and $M_2$, $\forall \theta\in\{\theta_i\}_{i=1}^{\infty}$ and $x\in\Omega$, such that
\begin{equation}\label{equ:nabla_x bound}
  \norm{\nabla_x\hat{u}_{\theta}(x)}  \leq M_1, \quad  \norm{\nabla\nabla_x\hat{u}_{\theta}(x)}_2  \leq M_2.
\end{equation}
Remember that above results hold for smooth activation functions, but does not hold for ReLU activation function. Then we can only expect the boundness of $\nabla_x\hat{u}_{\theta}(x)$ and $\nabla\nabla_x\hat{u}_{\theta}(x)$, instead of Lipschitz continuity. For example, consider the simplest network with two layers. For the ReLU activation function \cite{nair2010rectified}, there exists a constant $M$ depending on $\Omega$ such that
\begin{equation}\label{equ:L-continuous ReLU}
  \norm{\nabla\hat{u}_{\theta} - \nabla\hat{u}_{\bar{\theta}}}_2 \leq L_0\norm{\theta - \bar{\theta}}_2 + M.
\end{equation}

Based on Lipschitz continuity of the neural network solution and boundedness of derivatives, Lipschitz continuity of the loss function can be proven.
\begin{thm}[Lipschitz continuity of the loss function]\label{thm:L-continuous}
  For any $\theta$ and $\bar{\theta}$, the loss function of the neural network satisfies
  \begin{equation}
    \norm{\nabla I(\theta) - \nabla I(\bar{\theta})}_2 \leq L\norm{\theta - \bar{\theta}}_2.
  \end{equation}
\end{thm}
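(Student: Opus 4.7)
The plan is to differentiate $I(\theta)$ under the integral sign, express $\nabla I(\theta)-\nabla I(\bar\theta)$ as the integral of a pointwise difference of integrands, and then bound that pointwise difference by $\|\theta-\bar\theta\|_2$ using \lemref{lem:L-continuous gradient} together with the bounds \eqref{equ:nabla_x bound}. Since $\Omega$ is bounded and $f$ may be taken bounded on $\Omega$, integrating the pointwise Lipschitz estimate over $\Omega$ will produce the required global constant $L$.

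First I would justify interchanging $\nabla_\theta$ and $\int_\Omega$. Under Assumption~\ref{assumption:bound of theta}, the smoothness of $\sigma$, and the compactness of $\Omega$, the integrand of \eqref{equ:dnnsolution} is $C^\infty$ in $\theta$ with derivative uniformly bounded on $\Omega\times\{\theta_i\}$, so dominated convergence applies and
\begin{equation*}
\nabla I(\theta)=\int_\Omega\bigl((\nabla\nabla_x\hat u_\theta)^{\top}\nabla_x\hat u_\theta - f(x)\,\nabla\hat u_\theta\bigr)\,\mathrm dx.
\end{equation*}
Subtracting the analogous expression for $\bar\theta$ gives
\begin{equation*}
\nabla I(\theta)-\nabla I(\bar\theta)=\int_\Omega\!\Bigl[(\nabla\nabla_x\hat u_\theta)^{\top}\nabla_x\hat u_\theta-(\nabla\nabla_x\hat u_{\bar\theta})^{\top}\nabla_x\hat u_{\bar\theta}\Bigr]\mathrm dx-\int_\Omega f(x)\bigl(\nabla\hat u_\theta-\nabla\hat u_{\bar\theta}\bigr)\mathrm dx.
\end{equation*}

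Next I would handle the two contributions separately. For the linear-in-$\hat u$ term, assume $|f(x)|\le C_f$ on $\Omega$; the first bound of \lemref{lem:L-continuous gradient} then yields the pointwise estimate $\|f(x)(\nabla\hat u_\theta-\nabla\hat u_{\bar\theta})\|_2\le C_fL_0\|\theta-\bar\theta\|_2$. For the quadratic term, the key maneuver is the add-and-subtract split
\begin{equation*}
(\nabla\nabla_x\hat u_\theta)^{\top}\nabla_x\hat u_\theta-(\nabla\nabla_x\hat u_{\bar\theta})^{\top}\nabla_x\hat u_{\bar\theta}
=(\nabla\nabla_x\hat u_\theta)^{\top}(\nabla_x\hat u_\theta-\nabla_x\hat u_{\bar\theta})+\bigl(\nabla\nabla_x\hat u_\theta-\nabla\nabla_x\hat u_{\bar\theta}\bigr)^{\top}\nabla_x\hat u_{\bar\theta}.
\end{equation*}
Applying Cauchy--Schwarz, the bounds $\|\nabla_x\hat u_{\bar\theta}\|\le M_1$ and $\|\nabla\nabla_x\hat u_\theta\|_2\le M_2$ from \eqref{equ:nabla_x bound}, and the second and third inequalities of \lemref{lem:L-continuous gradient}, controls each piece by $(M_2L_1+M_1L_2)\|\theta-\bar\theta\|_2$ pointwise.

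Finally I would integrate over $\Omega$. Since $\Omega$ is bounded with $|\Omega|<\infty$, combining the estimates gives
\begin{equation*}
\|\nabla I(\theta)-\nabla I(\bar\theta)\|_2\le|\Omega|\bigl(M_2L_1+M_1L_2+C_fL_0\bigr)\|\theta-\bar\theta\|_2,
\end{equation*}
which is the desired Lipschitz bound with $L:=|\Omega|(M_2L_1+M_1L_2+C_fL_0)$. The main obstacle is the quadratic $|\nabla_x\hat u_\theta|^2$ term: one cannot simply invoke Lipschitz continuity of the map $\theta\mapsto\nabla_x\hat u_\theta$, and the product rule forces a simultaneous use of boundedness of $\nabla_x\hat u_\theta$ and Lipschitz continuity of its $\theta$-derivative, which is precisely why both $L_1,L_2$ and $M_1,M_2$ must be available from \lemref{lem:L-continuous gradient} and \eqref{equ:nabla_x bound}. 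The smoothness of $\sigma$ is essential here; for ReLU networks the estimate \eqref{equ:L-continuous ReLU} would leave an uncontrolled additive constant and this strategy would fail.
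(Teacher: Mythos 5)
Your proposal is correct and follows essentially the same route as the paper's own proof: differentiate under the integral, apply the same add-and-subtract split to the quadratic term, and invoke Lemma~\ref{lem:L-continuous gradient} together with \eqref{equ:nabla_x bound} to get $L$ built from $M_2L_1+M_1L_2+L_0\sup_\Omega|f|$. Your version is in fact slightly more careful than the paper's, since you justify the interchange of $\nabla_\theta$ and $\int_\Omega$ and retain the $|\Omega|$ factor that the paper silently drops when passing from the pointwise bound to the integrated one.
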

\begin{proof}
From \eqref{equ:dnnsolution}, we have
  \begin{align*}
    \nabla I(\theta) = \int_{\Omega}\nabla\nabla_x\hat{u}_{\theta}(x)\cdot\nabla_x\hat{u}_{\theta}(x) \mathrm{d}x - \int_{\Omega}f(x)\nabla\hat{u}_{\theta}(x)\mathrm{d}x.
  \end{align*}
By the triangle inequality, we have
\begin{align*}
    \norm{\nabla I(\theta) - \nabla I(\bar{\theta})}_2 & \leq \norm{\int_{\Omega}\left(\nabla\nabla_x\hat{u}_{\theta}(x)\cdot\nabla_x\hat{u}_{\theta}(x) - \nabla\nabla_x\hat{u}_{\theta}(x)\cdot\nabla_x\hat{u}_{\bar{\theta}}(x)\right)\mathrm{d}x}_2 \\
    &\quad + \norm{\int_{\Omega}\left(\nabla\nabla_x\hat{u}_{\theta}(x)\cdot\nabla_x\hat{u}_{\bar{\theta}}(x) - \nabla\nabla_x\hat{u}_{\bar{\theta}}(x)\cdot\nabla_x\hat{u}_{\bar{\theta}}(x)\right)\mathrm{d}x}_2 \\
    & \quad + \norm{\int_{\Omega}f(x)\left(\nabla\hat{u}_{\theta}(x) - \nabla\hat{u}_{\bar{\theta}}(x)\right)\mathrm{d}x}_2\\
    & \leq M_2L_1\norm{\theta - \bar{\theta}}_2 + M_1L_2\norm{\theta - \bar{\theta}}_2 + L_0\max_{x\in\Omega}\abs{f(x)}\norm{\theta - \bar{\theta}}_2\\
    & \leq L\norm{\theta - \bar{\theta}}_2
  \end{align*}
  with $L = M_2L_1 + M_1L_2 + L_0\max_{x\in\Omega}\abs{f(x)}$.
\end{proof}

An important consequence of the \thmref{thm:L-continuous} is that for $\forall\theta,\bar{\theta}\in\{\theta_i\}_{i=1}^{\infty}$,
\begin{equation}
  \label{equ:L-continuous direct}
  I(\theta) \leq I(\bar{\theta}) + \nabla I(\bar{\theta})^T(\theta-\bar{\theta}) + \frac 1{2}L\norm{\theta-\bar{\theta}}_2^2.
\end{equation}

To proceed, we need the following assumptions.
\begin{assumption}[First and second moment assumption]\label{assumption:moment}
For the stochastic vector, assume that first and second moments satisfy
  \begin{itemize}
    \item There exists $0<\mu\leq\mu_{G}$ such that, $\forall i\in\mathbb{N}^+$,
    \begin{equation}
      \nabla I(\theta_i)^T\mathds{E}_{\xi_i}[g(\theta_i, \xi_i)] \geq \mu(1 + r(N))\norm{\nabla I(\theta_i)}_2^2
    \end{equation}
    and
    \begin{equation}
      \norm{\mathds{E}_{\xi_i}[g(\theta_i, \xi_i)]}_2 \leq \mu_G(1 + r(N))\norm{\nabla I(\theta_i)}_2.
    \end{equation}
    \item For the second moment, there exist $C_V\geq0$ and $M_V\geq0$ such that, $\forall i\in\mathbb{N}^+$,
    \begin{equation}\label{equ:variance limit}
      \mathds{V}_{\xi_i}[g(\theta_i, \xi_i)] \leq C_Vr(N) + M_V(1+r(N))\norm{\nabla I(\theta_i)}_2^2.
    \end{equation}
  \end{itemize}
  where $r(N) = \left(D(P_N)\right)^2$, and $r(N)=O(1/N)$ for MC sampling and $r(N) = O((\ln(N))^{2K}/N^2)$ for QMC sampling.
\end{assumption}
Note that $g(\theta_i, \xi_i)$ is an unbiased estimate of $\nabla I(\theta_i)$ as $N\rightarrow\infty$ when  $\mu=\mu_G = 1$ in Assumption \ref{assumption:moment}. Moreover, according to \eqref{equ:variance limit}, we have
\begin{align*}
  \mathds{E}_{\xi_i}[\norm{g(\theta_i, \xi_i)}_2^2] &= \mathds{V}_{\xi_i}[g(\theta_i, \xi_i)] + \norm{\mathds{E}_{\xi_i}[g(\theta_i, \xi_i)]}_2^2 \\
  &\leq C_Vr(N) + M_G(1+r(N))\norm{\nabla I(\theta_i)}_2^2
\end{align*}
with $M_G = M_V + \mu_G^2(1+r(N))$. For a fixed stepsize $\alpha_i = \alpha$, according to \eqref{equ:L-continuous direct}, we have
\begin{align}
  \label{equ:inequality of expectation}
  \mathds{E}_{\xi_i}[I(\theta_{i+1})] - I(\theta_i) \leq&\frac L{2}\alpha^2C_Vr(N) - \alpha\left(\mu - \frac L{2}\alpha M_G\right)\left(1+r(N)\right)\norm{\nabla I(\theta_i)}_2^2.
\end{align}

\begin{assumption}[Strong convexity]\label{assumption:strong convexity}
The loss function $I(\theta)$ is strongly convex with respect to $\theta$, i.e., there exists a constant $c$, $\forall \theta, \bar{\theta}\in\{\theta_i\}_{i=1}^{\infty}$, such that
  \begin{equation*}
    I(\bar{\theta}) \geq I(\theta) + \nabla I(\theta)^T(\bar{\theta} - \theta) + \frac {c}{2}\norm{\bar{\theta} - \theta}_2^2.
  \end{equation*}
\end{assumption}
Following \cite{Bottou2018optimization}, if $I(\theta)$ is strongly convex and $\theta^*$ is the unique minimizer, $\forall \theta\in\{\theta_i\}_{i=1}^{\infty}$, we have
\begin{equation}\label{equ:strongconvexity}
2c(I(\theta) - I(\theta^*)) \leq \norm{\nabla I(\theta)}_2^2
\end{equation}
with $c\leq L$.
\begin{thm}[Strongly convex loss function, fixed stepsize, MC and QMC samplings]
\label{thm:the last thm} Let the data set $P_N = \{x^1,x^2,\cdots, x^{N} \}\subset\Omega$ being generated by MC method or QMC method, the stepsize $\alpha_i = \alpha$ being a constant satisfying
\[
0 < \alpha \leq \frac{\mu}{LM_G}.
\]
Under Assumptions \ref{assumption:bound of theta}, \ref{assumption:moment}, and \ref{assumption:strong convexity}, the expected optimality gap satisfies
\begin{align}\label{equ:convergence}
  \lim\limits_{i\to\infty}\mathds{E}[I(\theta_i) - I(\theta^*)] = \frac{\alpha LC_V}{2c\mu}r(N)
\end{align}
with $\theta^*$ being the unique minimizer.
\end{thm}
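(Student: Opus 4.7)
The plan is to start from the one-step descent inequality
\[
\mathds{E}_{\xi_i}[I(\theta_{i+1})] - I(\theta_i) \leq \tfrac{L}{2}\alpha^2 C_V r(N) - \alpha\bigl(\mu - \tfrac{L}{2}\alpha M_G\bigr)(1+r(N))\|\nabla I(\theta_i)\|_2^2
\]
already established in the excerpt (which combines \thmref{thm:L-continuous} with Assumption \ref{assumption:moment}), combine it with strong convexity via \eqref{equ:strongconvexity}, and then solve the resulting linear recursion for $A_i := \mathds{E}[I(\theta_i) - I(\theta^*)]$. The overall strategy is the standard SGD template of \cite{Bottou2018optimization}; what is new is keeping track of the sampling-quality factor $r(N)$ all the way through to the limit.

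First, I would absorb the stepsize condition. The hypothesis $\alpha \leq \mu/(LM_G)$ immediately gives $\tfrac{L}{2}\alpha M_G \leq \mu/2$, so $\mu - \tfrac{L}{2}\alpha M_G \geq \mu/2$. Using this together with the crude lower bound $1 + r(N) \geq 1$ on the gradient-norm coefficient, and then invoking \eqref{equ:strongconvexity} to replace $\|\nabla I(\theta_i)\|_2^2$ by $2c(I(\theta_i) - I(\theta^*))$, I subtract $I(\theta^*)$ from both sides and take total expectation (commuting $\mathds{E}$ with $\mathds{E}_{\xi_i}$ via the tower property) to arrive at the affine recursion
\[
A_{i+1} \leq (1 - \alpha\mu c)\, A_i + \tfrac{L}{2}\alpha^2 C_V r(N).
\]
Because $\alpha \leq \mu/(LM_G)$, $c \leq L$, and $M_G \geq \mu_G^2 \geq \mu^2$, the contraction factor $\rho := 1 - \alpha\mu c$ lies in $[0,1)$, so iterating gives $A_i \leq \rho^i A_0 + \tfrac{L\alpha^2 C_V r(N)}{2}\sum_{k=0}^{i-1}\rho^k$; passing to the limit $i \to \infty$ produces the fixed point $\tfrac{\alpha L C_V}{2c\mu}\, r(N)$ as claimed. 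Substituting $r(N) = \mathcal{O}(1/N)$ for MC and $r(N) = \mathcal{O}((\ln N)^{2K}/N^2)$ for QMC then makes the quantitative advantage of QMC manifest.

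The main obstacle I foresee is not the algebra of the recursion but the book-keeping of the two distinct roles of $r(N)$: it enters additively through the variance bound \eqref{equ:variance limit} (which drives the asymptotic bias), and multiplicatively through the first-moment part of Assumption \ref{assumption:moment} (which tunes the signal-to-noise quality of $g(\theta_i,\xi_i)$ as a surrogate for $\nabla I(\theta_i)$). Bounding the multiplicative factor below by $1$ is what produces the clean limit $\tfrac{\alpha L C_V}{2c\mu}r(N)$; keeping track of it more carefully would yield $\tfrac{\alpha L C_V r(N)}{2c\mu(1+r(N))}$, which agrees with the stated limit to leading order in $r(N)$. One also has to check, before interchanging limits and expectations, that Assumption \ref{assumption:bound of theta} is used to guarantee $A_i < \infty$ for every $i$, so that the telescoping geometric series is well defined and the passage to $i\to\infty$ is unambiguous.
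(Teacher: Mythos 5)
Your proposal is correct and follows essentially the same route as the paper: the same one-step descent inequality combined with the stepsize bound and the strong-convexity inequality \eqref{equ:strongconvexity} to obtain the affine recursion $A_{i+1}\le(1-c\mu\alpha)A_i+\tfrac{L}{2}\alpha^2C_Vr(N)$, followed by the same contraction argument (your summing of the geometric series versus the paper's subtraction of the fixed point $\tfrac{\alpha LC_V}{2c\mu}r(N)$ before iterating is only a cosmetic difference). Your closing observation that the multiplicative factor $1+r(N)$ is simply bounded below by $1$ matches exactly what the paper does.
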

\begin{proof}
Using \eqref{equ:inequality of expectation} and \eqref{equ:strongconvexity}, $\forall i\in\mathbb{N}$, we have
  \begin{align*}
    \mathds{E}_{\xi_i}[I(\theta_{i+1})] - I(\theta_i) &\leq \frac L{2}\alpha^2C_Vr(N) - \alpha\left(\mu - \frac L{2}\alpha M_G\right)\left(1+r(N)\right)\norm{\nabla I(\theta_i)}_2^2 \\
    &\leq \frac L{2}\alpha^2C_Vr(N) - \frac{\mu\alpha}{2}\left(1+r(N)\right)\norm{\nabla I(\theta_i)}_2^2\\
    &\leq \frac L{2}\alpha^2C_Vr(N) - c\mu\alpha(1+r(N))(I(\theta_i) - I(\theta^*))\\
    &\leq \frac L{2}\alpha^2C_Vr(N) - c\mu\alpha(I(\theta_i) - I(\theta^*)),
  \end{align*}
since $I(\theta_i) - I(\theta^*)\geq 0$ holds under the assumption of strongly convexity.
Adding $I(\theta_i) - I(\theta^*)$ to both sides of the above inequality and taking total expectation yields
  \begin{align*}
    \mathds{E}[I(\theta_{i+1}) - I(\theta^*)] \leq (1-c\mu\alpha)\mathds{E}[I(\theta_i) - I(\theta^*)] + \frac L{2}\alpha^2C_Vr(N).
  \end{align*}
Subtracting $\frac{\alpha LC_V}{2c\mu}r(N)$ from both sides produces
  \begin{align*}
   \mathds{E}[I(\theta_{i+1}) - I(\theta^*)] - \frac{\alpha LC_V}{2c\mu}r(N) \leq (1-c\mu\alpha)\left(\mathds{E}[I(\theta_{i}) - I(\theta^*)] - \frac{\alpha LC_V}{2c\mu}r(N) \right).
  \end{align*}
Note that
  \[
  0 < c\mu\alpha \leq \frac{c\mu^2}{LM_G} \leq \frac{c\mu^2}{L\mu^2} = \frac{c}{L} \leq 1,
  \]
a recursive argument yields
\begin{align*}
\mathds{E}[I(\theta_{i+1}) - I(\theta^*)] - \frac{\alpha LC_V}{2c\mu}r(N) \leq (1-c\mu\alpha)^i\left(\mathds{E}[I(\theta_{1}) - I(\theta^*)] - \frac{\alpha LC_V}{2c\mu}r(N) \right),
\end{align*}
and thus
\begin{align*}
    \lim\limits_{i\to\infty}\mathds{E}[I(\theta_i) - I(\theta^*)] = \frac{\alpha LC_V}{2c\mu}r(N).
\end{align*}
\end{proof}

\thmref{thm:the last thm} provides a quantitative estimate of the accuracy of sampling strategies on the convergence rate of the iteration sequence. Regardless of the sampling strategy, the convergence rate is linear which is determined by the SGD method.
\begin{rem}
  When $N\to\infty$, for both MC and QMC methods, we have $r(N)\to0$. Form \thmref{thm:the last thm}, we conclude that
  \begin{equation*}
    \lim\limits_{N\to\infty}\lim\limits_{i\to\infty}\mathds{E}[I(\theta_i) - I(\theta^*)] = 0.
  \end{equation*}
In practice, however, with the increasing size of training data set, the gap usually does not tend to $0$ under the fixed stepsize condition.
We attributes this to the irrationality of \eqref{equ:variance limit}. Instead, \eqref{equ:variance limit} shall be relaxed to \cite{Bottou2018optimization}
  \begin{equation*}
      \mathds{V}_{\xi_i}[g(\theta_i, \xi_i)] \leq C_V + M_V(1+r(N))\norm{\nabla I(\theta_i)}_2^2.
    \end{equation*}
Then we have
  \begin{equation*}
    \lim\limits_{N\to\infty}\lim\limits_{i\to\infty}\mathds{E}[I(\theta_i) - I(\theta^*)] = \frac{\alpha LC_V}{2c\mu},
  \end{equation*}
which implies the convergence of the sequence of function values near the optimal value. Therefore, an optimizer with fixed stepsize is generally not the best choice \cite{Bottou2018optimization}. Instead, the SGD method with diminishing stepsize is popular in real applications, such as ADAM method \cite{ADAM} using in the implementation.
\end{rem}

\begin{rem}
From the convergence analysis \eqref{equ:convergence}, we expect that QMC method outperforms MC method in terms of convergence order. It is reasonable to find from Tables \ref{tab:convergence error of DBC} and \ref{tab:convergence error of NBC} that QMC method has better rates than MC method. However, the difference in rate is not as significant as the difference in magnitude. We attribute this difference to the nonconvexity of the loss function and the above analysis relies crucially on the strong convexity assumption of the same function. Whatever, for the same accuracy requirement in practice, QMC method outperforms MC method in terms of efficiency by orders of magnitude. Therefore, we recommend the usage of QMC sampling in machine-learning PDEs whenever a sampling strategy is needed.
\end{rem}

\section{Conclusion}\label{sec:conclusion}
In this paper, we have proposed to approximate the loss function using quasi-Monte Carlo method, instead of Monte Carlo method that is commonly used in machine-learning PDEs. Numerical results based on deep Ritz method have shown the significant advantage of quasi-Monte Carlo method in terms of accuracy or efficiency. All the codes that generate numerical results included in this work are available from \url{https://github.com/Lyupinpin/DeepRitzMethod}.

Theoretically, we have proved the convergence of neural network solver based on quasi-Monte Carlo sampling in terms of the sampling size and the iteration number. Although there are practical issues such as the nonconvexity of the loss function, our analysis does provide a comprehensive understanding of why quasi-Monte Carlo method always outperforms Monte-Carlo method and suggests the usage of the former whenever an approximation of high-dimensional integrals is needed.

\section*{Acknowledgements}
This work is supported in part by the grants NSFC 21602149 and 11971021 (J.~Chen), NSFC 11501399 (R.~Du). P.~Li is grateful to Ditian Zhang for very helpful discussions. L.~Lyu acknowledges the financial support of Undergraduate Training Program for Innovation and Entrepreneurship, Soochow University (Projection 201810285019Z). Part of the work was done when L.~Lyu was doing a summer internship at Department of Mathematics, Hong Kong University of Science and Technology. L.~Lyu would like to thank its hospitality.
%
%

\normalem
\bibliographystyle{unsrt}
\bibliography{refs}

\begin{thebibliography}{10}

\bibitem{E2018}
Weinan E and Bing Yu.
\newblock The deep ritz method: A deep learning-based numerical algorithm for
  solving variational problems.
\newblock {\em Communications in Mathematics and Statistics}, 6(1):1--12, 2018.

\bibitem{Goodfellow2016}
Ian Goodfellow, Yoshua Bengio, and Aaron Courville.
\newblock {\em Deep Learning}.
\newblock MIT Press, 2016.

\bibitem{Wang2012EndtoendTR}
Tao Wang, David~J. Wu, Adam Coates, and Andrew~Y. Ng.
\newblock End-to-end text recognition with convolutional neural networks.
\newblock {\em Proceedings of the 21st International Conference on Pattern
  Recognition (ICPR2012)}, pages 3304--3308, 2012.

\bibitem{NIPS2012_4824}
Alex Krizhevsky, Ilya Sutskever, and Geoffrey~E. Hinton.
\newblock Imagenet classification with deep convolutional neural networks.
\newblock {\em Communications of the ACM}, 60:84--90, 2012.

\bibitem{hinton2012deep}
Hinton Geoffrey, Deng Li, Yu~Dong, Dahl George, Mohamed Abdel-rahman, Jaitly
  Navdeep, Senior Andrew, Vanhoucke Vincent, Nguyen Patrick, Kingsbury Brian,
  and Sainath Tara.
\newblock Deep neural networks for acoustic modeling in speech recognition.
\newblock {\em IEEE Signal Processing Magazine}, 29:82--97, 2012.

\bibitem{Sarikaya:2014:ADB:2687012.2687014}
Ruhi Sarikaya, Geoffrey~E. Hinton, and Anoop Deoras.
\newblock Application of deep belief networks for natural language
  understanding.
\newblock {\em IEEE/ACM Transactions on Audio, Speech, and Language
  Processing}, 22(4):778--784, 2014.

\bibitem{Jiequn2018}
Jiequn Han, Arnulf Jentzen, and Weinan E.
\newblock Solving high-dimensional partial differential equations using deep
  learning.
\newblock {\em PNAS}, 115(34):8505--8510, 2018.

\bibitem{Fan2019}
Yuwei Fan, Feliu-Fab{\`a} Jordi, Lin Lin, Lexing Ying, and Leonardo
  Zepeda-N{\'u}{\~{n}}ez.
\newblock A multiscale neural network based on hierarchical nested bases.
\newblock {\em Research in the Mathematical Sciences}, 6(2):21, 2019.

\bibitem{deepGalerkin2018}
Justin Sirignano and Konstantinos Spiliopoulos.
\newblock {DGM: A} deep learning algorithm for solving partial differential
  equations.
\newblock {\em Journal of Computational Physics}, 375:1339--1364, 2018.

\bibitem{Carleo2017}
Carleo Giuseppe and Troyer Matthias.
\newblock Solving the quantum many-body problem with artificial neural
  networks.
\newblock {\em Science}, 355(6325):602--606, 2017.

\bibitem{Han2019}
Jiequn Han, Linfeng Zhang, and Weinan E.
\newblock Solving many-electron schr\"odinger equation using deep neural
  networks.
\newblock {\em Journal of Computational Physics}, 399, 2019.

\bibitem{Weinan2017349}
Weinan E, Jiequn Han, and Arnulf Jentzen.
\newblock Deep learning-based numerical methods for high-dimensional parabolic
  partial differential equations and backward stochastic differential
  equations.
\newblock {\em Communications in Mathematics and Statistics}, 5(4):349--380,
  2017.

\bibitem{Beck20191563}
Christian Beck, Weinan E, and Arnulf Jentzen.
\newblock Machine learning approximation algorithms for high-dimensional fully
  nonlinear partial differential equations and second-order backward stochastic
  differential equations.
\newblock {\em Journal of Nonlinear Science}, 29(4):1563--1619, 2019.

\bibitem{GonzlezCervera2019}
J~A~Gonz{\'{a}}lez Cervera.
\newblock Solution of the black-scholes equation using artificial neural
  networks.
\newblock {\em Journal of Physics: Conference Series}, 1221:012044, 2019.

\bibitem{Randall2007}
Randall~J. LeVeque.
\newblock {\em Finite Difference Methods for Ordinary and Partial Differential
  Equations: Steady-State and Time-Dependent Problems}.
\newblock Society for Industrial and Applied Mathematics, 2007.

\bibitem{FEM2007}
Susanne~C. Brenner and L.~Ridgway Scoot.
\newblock {\em The Mathematical Theory of Finite Element Method}.
\newblock Springer, 2007.

\bibitem{spectral2011}
Jie Shen, Tao Tang, and Li-Lian Wang.
\newblock {\em Spectral methods: Algorithm, Analysis and Application}.
\newblock Springer, 2011.

\bibitem{gerstner_numerical_1998}
Thomas Gerstner and Michael Griebel.
\newblock Numerical integration using sparse grids.
\newblock {\em Numerical Algorithms}, 18(3-4):209, January 1998.

\bibitem{bungartz_sparse_2004}
Hans-Joachim Bungartz and Michael Griebel.
\newblock Sparse grids.
\newblock {\em Acta Numerica}, 13:147--269, May 2004.

\bibitem{DG2016}
Xiaobing Feng, Thomas Lewis, and Michael Neilan.
\newblock Discontinuous {Galerkin} finite element differential calculus and
  applications to numerical solutions of linear and nonlinear partial
  differential equations.
\newblock {\em Journal of Computational and Applied Mathematics}, 299:68--91,
  2016.

\bibitem{Bottou2018optimization}
L\'eon Bottou, Frank~E. Curtis, and Jorge Nocedal.
\newblock Optimization for large-scale machine learning.
\newblock {\em SIAM Review}, 60(2):223--311, 2018.

\bibitem{liu2008monte}
Jun~S Liu.
\newblock {\em Monte Carlo strategies in scientific computing}.
\newblock Springer Science \& Business Media, 2008.

\bibitem{Ogata1989}
Yosihiko Ogata.
\newblock A {Monte Carlo} method for high dimensional integration.
\newblock {\em Numerische Mathematik}, 55(2):137--157, 1989.

\bibitem{QMCerror1992}
H.~Niederreiter.
\newblock Random number generation and quasi-{Monte Cralo} methods.
\newblock {\em CBMS-SIAM}, 63, 1992.

\bibitem{Dick2013QMC}
Josef Dick, Frances~Y. Kuo, and Ian~H. Sloan.
\newblock High-dimensional integration: The quasi-{Monte Carlo} way.
\newblock {\em Acta Numerical}, 22:133--288, 2013.

\bibitem{Bruno2004survey}
Bruno Tuffin.
\newblock Randomization of {Quasi-Monte Carlo} methods for error estimation:
  survey and normal approximation.
\newblock {\em Monte Carlo Methods and Applications}, 10(3):617--628, 2004.

\bibitem{QMCvariational2018}
Alexander Buchholz, Florian Wenzel, and Stephan Mandt.
\newblock {Quasi-Monte Carlo} variational inference.
\newblock {\em arXiv}, 1807.01604, 2018.

\bibitem{Liyao2019}
Liyao Lyu, Zhiwen Zhang, and Jingrun Chen.
\newblock Connecting exciton diffusion with surface roughness via deep
  learning.
\newblock {\em arXiv}, 1910.14209, 2019.

\bibitem{evans_2010}
Evans~C. Lawrence.
\newblock {\em Partial differential equations(second edition)}.
\newblock American Mathematical Society, 2010.

\bibitem{DBLP:journals/corr/abs-1710-05941}
Prajit Ramachandran, Barret Zoph, and Quoc~V. Le.
\newblock Searching for activation functions.
\newblock {\em arXiv}, 1710.05941, 2017.

\bibitem{DBLP:journals/corr/HeZRS15}
Kaiming He, Xiangyu Zhang, Shaoqing Ren, and Jian Sun.
\newblock Deep residual learning for image recognition.
\newblock {\em 2016 IEEE Conference on Computer Vision and Pattern Recognition
  (CVPR)}, 2:770--778, 2016.

\bibitem{niederreiter1992random}
Harald Niederreiter.
\newblock {\em Random number generation and quasi-{Monte Carlo} methods}.
\newblock SIAM, 1992.

\bibitem{Russel1998}
Russel Caflisch.
\newblock {\em Monte Carlo and quasi-{Monte Carlo} methods}, volume~7.
\newblock Cambridge University Press.

\bibitem{dick2013high}
Josef Dick, Frances~Y. Kuo, and Ian~H. Sloan.
\newblock High-dimensional integration: the quasi-{Monte Carlo} way.
\newblock {\em Acta Numerica}, 22:133--288, 2013.

\bibitem{Sobol1976}
I.~M. Sobol.
\newblock Uniformly distributed sequences with an additional uniform property.
\newblock {\em USSR Computational Mathematics and Mathematical Physics},
  16(5):236--242, 1976.

\bibitem{ADAM}
Diederik~P. Kingma and Jimmy Ba.
\newblock Adam: A method for stochastic optimization.
\newblock {\em CoRR}, 1412.6980, 2014.

\bibitem{nair2010rectified}
Vinod Nair and Geoffrey~E Hinton.
\newblock Rectified linear units improve restricted boltzmann machines.
\newblock In {\em Proceedings of the 27th international conference on machine
  learning (ICML-10)}, pages 807--814, 2010.

\end{thebibliography}

\end{document}